\newtheorem{example}[theorem]{Example}
\def\qed{$\endproof$}
\begin{document}

\allowdisplaybreaks

\title{Optimization and Convergence of Observation Channels in
  Stochastic Control} \author{Serdar Y\"uksel and Tam\'as Linder}
\maketitle
{\renewcommand{\thefootnote}{}

\footnotetext{The material of this paper was presented in part at the
  2010 Information Theory and Applications Workshop, University of
  California, San Diego, Feb.\ 2010.}

\footnotetext{The authors are with the Department of Mathematics and
    Statistics, Queen's University, Kingston, Ontario, Canada, K7L
    3N6.  Email: (yuksel,linder)@mast.queensu.ca. This research was
    partially supported by the Natural Sciences and Engineering
    Research Council of Canada (NSERC). }}

\begin{abstract}
This paper studies the optimization of observation channels
(stochastic kernels) in partially observed stochastic control
problems. In particular, existence and  continuity
properties are investigated mostly (but not exclusively) concentrating
on the single-stage case. Continuity properties of the optimal cost
in channels are explored under total variation, setwise convergence,
and weak convergence. Sufficient conditions for compactness of a class
of channels under total variation and setwise convergence are
presented and applications to quantization are explored.  
\end{abstract}
\begin{keywords}
Stochastic control, information theory, observation channels,
optimization, quantization
\end{keywords}

\begin{AMS}
15A15, 15A09, 15A23
\end{AMS}

\pagestyle{myheadings}
\thispagestyle{plain}

\section{Introduction}

In stochastic control, one is often concerned with the following
problem: Given a dynamical system, an observation channel (stochastic
kernel), a cost function, and an action set, when does there exist an
optimal policy, and what is an optimal control policy? The theory for
such problems is advanced, and practically significant, spanning a
wide variety of applications in engineering, economics, and natural sciences.

In this paper, we are interested in a dual problem with the following
questions to be explored: Given a dynamical system, a cost function,
an action set, and a set of observation channels, does there exist an
optimal observation channel?  What is the right convergence notion for
continuity in such observation channels for optimization purposes? The
answers to these questions may provide useful tools for characterizing
an optimal observation channel subject to constraints.

We start with the probabilistic setup of the problem. Let $\mathbb{X}
\subset \mathbb{R}^n$, be a Borel set in which elements of a
controlled Markov process $\{X_t,\, t \in \mathbb{Z}_+\}$ live.  Here
and throughout the paper $\mathbb{Z}_+$ denotes the set of nonnegative
integers and $\mathbb{N}$ denotes the set of positive integers.  Let
$\mathbb{Y} \subset \mathbb{R}^m$ be a Borel set, and let an
observation channel $Q$ be defined as a stochastic kernel (regular
conditional probability) from  $\mathbb{X}$ to $\mathbb{Y}$, such that
$Q(\,\cdot\,|x)$ is a probability measure on the (Borel)
$\sigma$-algebra ${\cal B}(\mathbb{Y})$ on $\mathbb{Y}$ for every $x
\in \mathbb{X}$, and $Q(A|\,\cdot\,): \mathbb{X}\to [0,1]$ is a Borel
measurable function for every $A \in {\cal B}(\mathbb{Y})$. Let a
decision maker (DM) be located at the output an observation channel
$Q$, with inputs $X_t$ and outputs $Y_t$.  Let $\mathbb{U}$ be a Borel
subset of some Euclidean space. An {\em admissible policy} $\Pi$ is a
sequence of control functions $\{\gamma_t,\, t\in \mathbb{Z}_+\}$ such
that $\gamma_t$ is measurable with respect to the $\sigma$-algebra
generated by the information variables
\[
I_t=\{Y_{[0,t]},U_{[0,t-1]}\}, \quad t \in \mathbb{N}, \quad
  \quad I_0=\{Y_0\}.
\]
where
\begin{equation}
\label{eq_control}
U_t=\gamma_t(I_t),\quad t\in \mathbb{Z}_+
\end{equation}
are the $\mathbb{U}$-valued control
actions and  we used the notation
\[
Y_{[0,t]} = \{Y_s,\, 0 \leq s \leq t \}, \quad U_{[0,t-1]} =
  \{U_s, \, 0 \leq s \leq t-1 \}.
\]
The joint distribution of the state, control, and observation
processes is determined by (\ref{eq_control}) and the following
relationships:
\[
  \Pr\bigl( (X_0,Y_0)\in B \bigr)
=  \int_B P(dx_0)Q(dy_0|x_0), \quad B\in \mathcal{B}(\mathbb{X}\times
\mathbb{Y}),
\]
where $P$ is the (prior) distribution of the initial state $X_0$, and
\begin{eqnarray*}
\label{eq_evol}
\lefteqn{  \Pr\biggl( (X_t,Y_t)\in B \, \bigg|\, X_{[0,t-1]}=x_{[0,t-1]},
  Y_{[0,t-1]}=y_{[0,t-1]}, U_{[0,t-1]}=u_{[0,t-1]} \biggr)
} \hspace{3cm} \\
\mbox{} \quad  \hspace{-4cm} &=& \int_B P(dx_t|x_{t-1}, u_{t-1})Q(dy_t|x_t), \quad  B\in \mathcal{B}(\mathbb{X}\times
\mathbb{Y}), \quad t\in \mathbb{N},
\end{eqnarray*}
where $P(\cdot|x,u)$ is a stochastic kernel from $\mathbb{X}\times
\mathbb{U}$ to $\mathbb{X}$.

One way of presenting the problem in a familiar setting is the
following: Consider a dynamical system described by the
discrete-time equations
\begin{eqnarray*}
X_{t+1} &=& f(X_t,U_t, W_t),\\
Y_t&=& g(X_t,V_t)
\end{eqnarray*}
for some measurable functions $f, g$, with
$\{W_t\}$ being independent and identically distributed (i.i.d) system
noise process and $\{V_t\}$ an i.i.d. disturbance process, which are
independent of $X_0$ and each other. Here, the second equation
represents the communication channel $Q$, as it describes the relation
between the state and observation variables.

With the above setup, let the objective of the decision maker be the
minimization of the cost
\begin{eqnarray}\label{Cost}
J(P,Q,\Pi)=E_{P}^{Q,\Pi}\bigg[\sum_{t=0}^{T-1} c(X_t,U_t)\bigg],
\end{eqnarray}
over the set of all admissible policies $\Pi$, where
$c:\mathbb{X}\times \mathbb{U}\to \mathbb{R}$ is a Borel
measurable cost function and $E_{P}^{Q,\Pi}$ denotes the expectation
with initial state probability measure given by $P$ under policy $\Pi$
and given channel $Q$. We adapt the convention that  random variables
are denoted by capital letters  and lowercase
letters denote their realizations. Also, given a probability measure
$\mu$ the notation $Z\sim \mu$ means that $Z$ is a random variable
with distribution~$\mu$. Finally, let $\mathscr{P}$ be the set of all admissible policies $\Pi$ described above.

We are interested in the following problems:

\noindent\textsc{Problem P1. Continuity on the space of channels
  (stochastic kernels)} \ Suppose $\{Q_n, n \in \mathbb{N}\}$ is a
sequence of communication
channels converging in some sense to a channel $Q$. When does \[
Q_n \to Q
\]
imply
\[ \inf_{\Pi \in \mathscr{P}} J(P,Q_n,\Pi)  \to \inf_{\Pi \in
  \mathscr{P}} J(P,Q,\Pi)?
\]

\smallskip

\noindent\textsc{Problem P2: Existence of optimal channels} \
Let ${\cal Q}$ be a set of communication channels. When do there exist
minimizing and maximizing channels for the problems
\[
\inf_{Q \in {\cal Q}} \inf_{\Pi  \in \mathscr{P}}
E^{Q,\Pi}_{P}\bigg[\sum_{t=0}^{T-1} c(X_t,U_t)\bigg]
\]
and
\[\sup_{Q \in {\cal Q}} \inf_{\Pi  \in \mathscr{P}}
E^{Q,\Pi}_{P}\bigg[\sum_{t=0}^{T-1} c(X_t,U_t)\bigg].
\]
If solutions to these problems exist, are they unique?

Problems \textsc{P1} and \textsc{P2} are challenging even in the single-stage
($T=1$) setup and in most of the paper we consider this
case. Admittedly, the multi-stage case is more important and we
briefly consider this case is Section~\ref{secmulti} at the end of the
paper.  Future work is needed to fully address this technically more
complex case.

The answers to problems  \textsc{P1} and \textsc{P2} may help solve problems in
application areas
such as:
\begin{itemize}
\item For a partially observed stochastic control problem, sometimes
  we have control over the observation channels by
  encoding/quantization. When does there exist an optimal quantizer
  for such a setup? (Optimal quantization)
\item Given an uncertainty set for the observation channels, can one
  identify a worst element/best element? (Robust control)

\item When estimating channels from empirical observations, under
quite general assumptions estimations converge to the actual
distribution, in some sense. For example, if an observation channel
has the form $Y_t= X_t + V_t$, where the independent noise $V_t$ has a density,
nonparametric density
estimation methods lead to convergence in total variation, whereas
for the general case, the empirical measures converge weakly with
probability one \cite{DevroyeGyorfi}, \cite{Dud02}. Do these modes of
convergence  imply
that we could design the optimal control policies based on empirical
estimates, and does the optimal cost converge to the correct limit as
the number of measurements grows? (Consistency of empirical
controllers)
\end{itemize}

In the following, we will address problems  \textsc{P1} and
\textsc{P2} and introduce
conditions under which we can provide affirmative/conclusive answers.

\subsection{Relevant literature}
The problems stated are related to three main areas of research:
Robust control, optimal quantizer design and design of experiments. 

References  \cite{Charalambous,Charalambous15,Charalambous2} have
considered  both
optimal control and
estimation and the  related problem of optimal control design when
the channel is unknown. In particular, \cite{Charalambous2}
studies the existence of optimal continuous estimation policies and
worst-case channels under a relative entropy constraint characterizing
the uncertainty in the system. In \cite{Charalambous15}, the total
variation norm is considered as the measure of the uncertainty, and
the inf-sup policy is determined (thus, the setup considered as a
min-max problem for the generation of optimal control
policies). Similarly, there are connections with robust detection,
such as those studied by Huber \cite{Huber} and Poor \cite{Poor}, when
the source distribution to be detected belongs to some set.

A related area is on the theory of optimal quantization:  References
\cite{AbayaWise}, \cite{GrayDavisson} are related as these papers
study the effects of uncertainties in the input distribution and
consider robustness in the quantizer design. References \cite{Linder}
and \cite{Pollard} study the consistency of optimal quantizers based
on empirical data for an unknown source. In the context of
decentralized detection, \cite{Tsitsiklis} studied certain topological
properties and the existence of optimal quantizers.  We will regard
the quantizers as a particular class of channels, and look for such
optimal channels. One by-product of our analysis will be a new approach
to obtain conditions for the existence of optimal quantizers for a
given class of cost functions under mild conditions.  We also note
that, regarding connections with information theory, some discussions
on the topology of information channels are presented in
\cite{MurakiOhya}. Recently, \cite{WuVerdu} considered continuity and
other functional properties of minimum mean square estimation problems
under Gaussian channels.

As mentioned earlier, in most of the paper we consider the single-stage case. We will
also briefly consider the technically more complex multi-stage case in Section~\ref{secmulti} where
further conditions on the controlled Markov chain must be imposed. The
full development of this general setup is the subject of future work.

The rest of the paper is organized as follows. In the next section, we
introduce three relevant topologies on the space of communication
channels. The continuity problem is considered in
Section~\ref{sectionCont}.
We study the
problem of existence of optimal channels in
Section~\ref{sectionExistence}, followed by applications on
quantization in
Section~\ref{secapp}. Section~\ref{secmulti}
gives an outlook to the multi-stage setup. The paper ends with the concluding remarks and
discussions in Section~\ref{sectionConclusion}.

\section{Some topologies on the space of communication channels}

One question that we wish address is the choice of an appropriate
notion of convergence for a sequence of observation channels.  Toward
this end, we first review three notions of convergence for probability
measures.

Let $\mathcal{P}(\mathbb{\mathbb{R}^N})$ denote the family of
all probability measure on $(\mathbb{X},\mathcal{B}(\mathbb{R}^N))$
for some $N\in \mathbb{N}$.
Let $\{\mu_n,\, n\in \mathbb{N}\}$ be a sequence in
$\mathcal{P}(\mathbb{R}^N)$. Recall that
$\{\mu_n\}$ is said to  converge
  to $\mu\in \mathcal{P}(\mathbb{R}^N)$ \emph{weakly} if
\[
 \int_{\mathbb{R}^N} c(x) \mu_n(dx)  \to \int_{\mathbb{R}^N}c(x) \mu(dx)
\]
for every continuous and bounded $c: \mathbb{R}^N \to \mathbb{R}$.
On the other hand, $\{\mu_n\}$ is said to  converge
  to $\mu\in \mathcal{P}(\mathbb{R}^N)$  \emph{setwise} if
\[
 \int_{\mathbb{R}^N} c(x) \mu_n(dx)  \to \int_{\mathbb{R}^N} c(x) \mu(dx)
\]
for every measurable and bounded $c: \mathbb{R}^N \to
\mathbb{R}$. Setwise convergence  can also be defined through
pointwise convergence on Borel subsets of $\mathbb{R}^N$ (see, e.g.,
\cite{HernandezLermaLasserre}), that is
\[
 \mu_n(A)   \to \mu(A),  \quad \text{for
   all $A \in{\cal B}(\mathbb{R}^N)$}
\]
since the space of simple functions is dense in the space of bounded
and measurable functions under the supremum norm.

For two probability measures $\mu,\nu \in
\mathcal{P}(\mathbb{R}^N)$, the \emph{total variation} metric
is given by
\begin{eqnarray}
\|\mu-\nu\|_{TV}&:= & 2 \sup_{B \in {\cal B}(\mathbb{R}^N)}
|\mu(B)-\nu(B)| \nonumber \\
 &=&  \sup_{f: \, \|f\|_{\infty} \leq 1} \bigg| \int f(x)\mu(dx) -
\int f(x)\nu(dx) \bigg|, \label{TValternative}
\end{eqnarray}
where the supremum is over all measurable real $f$ such that
$\|f\|_{\infty} = \sup_{x \in \mathbb{R}^N} |f(x)|\le 1$.
A sequence  $\{\mu_n\}$ is said to  converge
  to $\mu\in \mathcal{P}(\mathbb{R}^N)$ in total variation if
$\| \mu_n - \mu   \|_{TV}  \to 0.$

Setwise convergence is equivalent to pointwise convergence on Borel
sets whereas convergence in total variation requires uniform
convergence on Borel sets. Thus convergence in total variation implies
setwise convergence, which in turn implies weak convergence. It
follows that the induced topologies are of decreasing order of
strength,  with
the topology induced by convergence in total variation being the
strongest and the topology induced by weak convergence being the
weakest, with the topology induced by setwise convergence is in
between these two.  The topologies corresponding to convergence in
total variation and weak convergence are metrizable (the natural
metric for total variation convergence is
$d(\mu,\nu)=\|\nu-\nu\|_{TV}$; the usual choice for weak convergence 
is the Prohorov metric \cite{Billingsley}). The topology induced by
setwise convergence is not first countable, so it is not
metrizable (see, e.g., \cite[Prop.\ 2.2.1]{Ghosh}).

\subsection{Convergence of information (observation) channels}

Here $\mathbb{X}=\mathbb{R}^n$ and $\mathbb{Y}=\mathbb{R}^m$, and
$\mathcal{Q}$ denotes the set of all observation channels (stochastic
kernels) with input space $\mathbb{X}$ and output space
$\mathbb{Y}$. For $P\in  \mathcal{P}(\mathbb{X})$ and $Q\in
\mathcal{Q}$ we let $PQ$ denote the joint distribution induced on
$(\mathbb{X}\times \mathbb{Y}, \mathcal{B}(\mathbb{X}\times
\mathbb{Y}))$ by channel $Q$ with input distribution $P$:
\[
  PQ(A) = \int_{A} Q(dy|x)P(dx), \quad A\in  \mathcal{B}(\mathbb{X}\times
\mathbb{Y}).
\]

\begin{definition}[Convergence of Channels] \label{ch_conv_def}
  \begin{romannum}
\item A sequence of channels $\{Q_n\}$ converges to a channel $Q$
\emph{weakly at input $P$} if $PQ_n\to PQ$ weakly.

\item A sequence of channels $\{Q_n\}$ converges to a channel $Q$
\emph{setwise at input $P$} if $PQ_n\to PQ$ setwise, i.e., if
$PQ_n(A)\to PQ(A)$
for all Borel sets  $A \subset \mathbb{X}\times
\mathbb{Y}$.
\item
A sequence of channels $\{Q_n\}$ converges to a channel $Q$ in
\emph{total variation at input $P$} if $PQ_n\to PQ$ in total variation,
i.e., if $\| PQ_n - PQ \|_{TV}  \to 0$.
\end{romannum}
\end{definition}

If we introduce the equivalence relation $Q\overset{P}\equiv Q'$ if and only if
$PQ=PQ'$, $ Q,Q'\in \mathcal{Q}$, then the convergence notions in
Definition~\ref{ch_conv_def} only induce the corresponding topologies
(resp.\  metrics) on the resulting equivalence classes in $\mathcal{Q}$,
instead of $\mathcal{Q}$. Since in most of the development the
input distribution $P$ is fixed, there should be no confusion when
(somewhat incorrectly) we talk about the induced   topologies (resp.\
metrics)  on $\mathcal{Q}$.

The preceding definition involved the input distribution $P$. The
next lemma gives sufficient conditions which may be easier to
verify. The proof is given in the Appendix.

\pagebreak[2]

\begin{lemma}\mbox{} \label{UniversalInputCompactness}
  \begin{romannum}
  \item If  $\{Q_n(\, \cdot\, |x)\}$ converges to $Q(\, \cdot\, |x)$
    weakly for $P$-a.e.\ $x$, then $PQ_n\to PQ$ weakly.

  \item If  $\{Q_n(\, \cdot\, |x)\}$ converges to $Q(\, \cdot\, |x)$
   setwise  for $P$-a.e.\ $x$, then $PQ_n\to PQ$ setwise.

  \item  If  $\{Q_n(\, \cdot\, |x)\}$ converges to $Q(\, \cdot\, |x)$
   in total variation  for $P$-a.e.\ $x$, then $PQ_n\to PQ$ in total variation.
  \end{romannum}
\end{lemma}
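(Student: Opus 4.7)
The plan is to prove all three parts by the same strategy: for a suitable class of test functions (or Borel sets) on $\mathbb{X}\times\mathbb{Y}$, rewrite the quantity $PQ_n(\cdot)$ as an integral over $P$ of a function of $x$, observe that the hypothesis provides $P$-a.e.\ pointwise convergence of the integrand together with a uniform bound, and finish by the dominated convergence theorem. Standard measurability of $x\mapsto\int h(x,y)\,Q_n(dy\mid x)$ for Borel measurable bounded $h$ (which holds because $Q_n$ is a stochastic kernel) will be used throughout without further comment.

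For (i), I would fix a bounded continuous $f:\mathbb{X}\times\mathbb{Y}\to\mathbb{R}$ and set
\[
g_n(x)=\int_{\mathbb{Y}} f(x,y)\,Q_n(dy\mid x),\qquad g(x)=\int_{\mathbb{Y}} f(x,y)\,Q(dy\mid x).
\]
For each fixed $x$ the section $y\mapsto f(x,y)$ is continuous and bounded on $\mathbb{Y}$, so weak convergence $Q_n(\cdot\mid x)\to Q(\cdot\mid x)$ yields $g_n(x)\to g(x)$ for $P$-a.e.\ $x$. Since $|g_n|\le\|f\|_\infty$, dominated convergence gives $\int g_n\,dP\to\int g\,dP$, i.e., $PQ_n(f)\to PQ(f)$. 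As this holds for arbitrary bounded continuous $f$, $PQ_n\to PQ$ weakly.

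For (ii), the analogous argument uses sections: for any Borel $A\subseteq\mathbb{X}\times\mathbb{Y}$, the section $A_x=\{y:(x,y)\in A\}$ is Borel in $\mathbb{Y}$, and
\[
PQ_n(A)=\int_{\mathbb{X}} Q_n(A_x\mid x)\,P(dx).
\]
Setwise convergence at $P$-a.e.\ $x$ gives $Q_n(A_x\mid x)\to Q(A_x\mid x)$, and the integrand is dominated by $1$, so another application of dominated convergence yields $PQ_n(A)\to PQ(A)$. For (iii), I would use the supremum representation \eqref{TValternative}. For any Borel measurable $f$ with $\|f\|_\infty\le 1$,
\[
\bigl|PQ_n(f)-PQ(f)\bigr|\le\int_{\mathbb{X}}\biggl|\int_{\mathbb{Y}} f(x,y)\bigl(Q_n(dy\mid x)-Q(dy\mid x)\bigr)\biggr|P(dx)\le\int_{\mathbb{X}}\|Q_n(\cdot\mid x)-Q(\cdot\mid x)\|_{TV}\,P(dx),
\]
where the last inequality uses $\|f(x,\cdot)\|_\infty\le 1$ and the dual characterization of total variation. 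Taking the supremum over such $f$ on the left and invoking dominated convergence on the right (the integrand tends to $0$ for $P$-a.e.\ $x$ by hypothesis and is bounded by $2$) completes the proof.

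The only step requiring any care is the interchange and the measurability of the inner integrals appearing in each part; these are justified by the Fubini-type property of stochastic kernels applied to bounded Borel functions. Beyond that, the three arguments are essentially identical applications of dominated convergence, and no technical obstacle stands in the way.
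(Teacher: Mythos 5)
Your proposal is correct and follows essentially the same route as the paper's proof: in each part, write $PQ_n$ as an integral over $P$ of the inner kernel integral (or section probability), use the pointwise hypothesis to get $P$-a.e.\ convergence of the integrand, and conclude by dominated convergence; in (iii) you use the functional dual form of total variation where the paper takes a supremum over Borel sets, but the resulting bound $\int \|Q_n(\cdot\mid x)-Q(\cdot\mid x)\|_{TV}\,P(dx)$ and the dominated convergence step are the same.
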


The conditions in Lemma \ref{UniversalInputCompactness} are almost
universal in the choice of input probability measures; that is, the
convergence characterizations will be independent of the input
distributions if each of the conditions is replaced with convergence
of $\{Q_n(\, \cdot\, |x)\}$ to $Q(\, \cdot\, |x)$ for all $x \in
\mathbb{X}$. This is particularly useful when the input distribution
is unknown, or when the input distributions may change. The latter can
occur in multi-stage stochastic control problems.

\pagebreak[4]

\begin{example} \rm \begin{romannum} 
\item Consider the case where the observation channel has the form
  $Y_t=X_t+V_t$, where $\{V_t\}$ is an i.i.d.\ noise (disturbance)
  process. Suppose $V_t\sim f_{\theta_0}$ for some $\theta_0 \in \Theta$,
  where $\Theta \subset \mathbb{R}^d$ is a parameter set and
  $\{f_{\theta}: \theta\in \Theta\}$ is a parametric family of
  $n$-dimensional densities such that $f_{\theta_n}(v)\to
  f_{\theta_0}(v)$ for all $v\in \mathbb{R}^n$ and any sequence of
  parameters $\theta_n$ such that $\theta_n \to \theta_0$.  Then by
  Scheff\'e's theorem $f_{\theta_n}$ converges to $f_{\theta_0}$ in the
  $L_1$ sense, and consequently, the sequence of corresponding
  additive channels $Q_n(\,\cdot\,|x)$, defined by
\[
  Q_n(A|x)= \int_{A} f_{\theta_n}(z-x)\, dz, \quad  A\in {\cal B}(\mathbb{R}^n)
\]
converges to the channel
  $Q(\,\cdot\,|x)$ (corresponding to $f_{\theta}$) in total variation for
  all $x$.
\item Consider again the observation channel $Y_t=X_t+V_t$, but assume
  this time that we only know that $V_t$ has a density $f$ (which is
  unknown to us). If we have access to independent observations
  $V_1,\ldots,V_n$ from the noise process, then we can use any of the
  consistent nonparametric methods, e.g., \cite{DevroyeGyorfi}, to
  obtain an estimate $f_n$ which converges (with probability one) to
  $f$ in the $L_1$ sense as $n\to \infty$.  More explicitly, letting
  $(\Omega,\mathcal{A},\mathbb{P})$ be the probability space on which the
  independent observations $\{V_i\}$ are defined, for any $\omega\in
  \Omega$, the estimate $f_n= f_{n,\omega}$ is a pdf on
  $\mathbb{R}^n$, and there 
  exists $A\in \mathcal{A}$ with $\mathbb{P}(A)=1$ such that
  $\int|f_{n,\omega}(z)- f_n(z)|\, dz\to 0$ as $n\to \infty$ for all
  $\omega\in A$.  The estimated channel
  $Q_n(\,\cdot\,|x)=Q_{n,\omega}(\,\cdot\,|x)$ corresponding to
  $f_{n,\omega}$ converges to the true channel $Q(\,\cdot\,|x)$ in
  total variation for all $x$ with probability one. More explicitly, 
  for any $\omega \in A$, $Q_{n,\omega}(\,\cdot\,|x)$ converges to
  $Q(\,\cdot\,|x)$ in total variation as $n\to \infty$ for all $x$.

\item Now suppose that the observation channel $Q$ is such that
  $Q(\,\cdot\,|x)$ admits a conditional density $f(y|x)$ for all $x\in
  \mathbb{R}^n$. Given observations $(X_1,Y_n), \ldots, (X_n,Y_n)$
  drawn independently from the distribution $PQ$, there exists a
  sequence of nonparametric conditional density estimates $f_n(y|x)$
  such that
\[
  \int \biggl(\int |f_n(y|x)-f(y|x)|\, dy\biggr) P(dx) \to 0
\]
with probability one \cite{GyKo07}. This immediately implies that the channels
$Q_n$ corresponding to these estimates converge to $Q$ in total
variation at input $P$.

\item Finally, assume again the additive model $Y_t=X_t+V_t$, where
  now we do not have any information about the distribution $\mu$ of
  $V_t$. In this case there are no methods to consistently estimate
  $\mu$ in total variation from independent samples $V_1,\ldots,V_n$
  \cite{DeGy90}. However, the empirical distribution $\mu_n$ of the
  samples converges weakly to $\mu$ with probability one
  \cite{Dud02}. The corresponding estimated observation channels
  $Q_n(\,\cdot\,|x)$ converge weakly to the true channel
  $Q(\,\cdot\,|x)$ for all $x$ with probability one.

\end{romannum}
\end{example}

\subsection{Classes of assumptions}

Throughout the paper the following classes of assumptions will be
adopted for the cost function $c$ and the (Borel) set
$\mathbb{U}\subset \mathbb{R}^k$ in different contexts:

\pagebreak[2]

\noindent\textsc{Assumptions.}

\nopagebreak

\begin{itemize}
\item[\textsc{A1}.] The  function  $c: \mathbb{X} \times \mathbb{U} \to
  \mathbb{R}$ is  non-negative, bounded, and continuous on $\mathbb{X}
  \times \mathbb{U}$.

\item[\textsc{A2}.] The function function $c: \mathbb{X} \times
  \mathbb{U} \to \mathbb{R}$ is non-negative, measurable, and bounded.

\item[\textsc{A3}.] The  function  $c: \mathbb{X} \times \mathbb{U} \to
  \mathbb{R}$  is  non-negative, measurable, bounded,
  and continuous on $\mathbb{U}$
  for every $x \in \mathbb{X}$.

\item[\textsc{A4}.] $\mathbb{U}$ is a compact set.

\item[\textsc{A5}.] $\mathbb{U}$ is a convex set.
\end{itemize}

\section{Problem {\bf P1}: Continuity of the optimal cost in
  channels}\label{sectionCont}

In this section, we consider continuity properties under total
variation, setwise convergence and weak convergence. We consider the
single-stage case, and thus investigate the continuity of the functional
\begin{eqnarray*}
J(P,Q)&= & \inf_{\Pi} E^{Q,\Pi}_{P}\big[ c(X_0,U_0)\big]  \\
   &=& \inf_{\gamma\in \mathcal{G}} \int_{\mathbb{X}\times \mathbb{Y}} c(x,\gamma(y)) Q(dy|x)P(dx)
\end{eqnarray*}
in the channel $Q$, where $\mathcal G$ is the collection of all Borel
measurable functions mapping $\mathbb{Y}$ into $\mathbb{U}$.  Note
that by our previous notation, $\Pi=\gamma$ is an admissible
first-stage control policy. As before, in this section $\mathcal{Q}$
denotes the set of all channels with input space $\mathbb{X}$ and
output space $\mathbb{Y}$.

Total variation is a stringent notion for convergence. For example a
sequence of discrete probability measures never converges in total
variation to a probability measure which admits a density function
with respect to the Lebesgue measure.  On the other hand, setwise
convergence induces a topology on the space of probability measures
and channels which is not easy to work with.  This is mainly due to
the property that the space under this convergence is not metrizable.
However, the space of probability measures on a complete, separable,
metric (Polish) space endowed with the topology of weak convergence is
itself a complete, separable, metric space \cite{Billingsley}. The
Prohorov metric, for example, can be used to metrize this space. This
metric has found many applications in information theory and
stochastic control. Furthermore, there are well-known conditions to
identify whether a family of probability measures is weakly compact
\cite{Billingsley}. For these reasons, one would like to work with
weak convergence. However, as we will observe, weak convergence is
insufficient in a general setup for obtaining continuity.

Before proceeding further, however, we look for conditions under which
an optimal control policy exists; i.e, when the infimum in $
\inf_{\gamma} E^{Q,\gamma}_{P} [c(X,U)]$ is a minimum. The following
simple result is proved in the Appendix.

\begin{theorem}\label{InfimumMinimum}
Suppose assumptions \textsc{A3} and \textsc{A4} hold. Then, there exists an
optimal control policy for any channel $Q$.
\end{theorem}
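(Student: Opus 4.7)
The plan is to reduce the infimum over policies to a pointwise minimization on $\mathbb{Y}$ via disintegration of the joint distribution $PQ$, and then invoke a measurable selection theorem. Because $\mathbb{X}$ and $\mathbb{Y}$ are Borel subsets of Euclidean spaces, they are standard Borel, so $PQ$ admits a regular conditional decomposition $PQ(dx,dy) = P_Y(dy)\,P_{X|Y}(dx|y)$, where $P_Y$ is the marginal of $Y$ and $P_{X|Y}$ is a stochastic kernel from $\mathbb{Y}$ to $\mathbb{X}$. Using Fubini, the cost rewrites as
\[
\int_{\mathbb{X}\times\mathbb{Y}} c(x,\gamma(y))\,Q(dy|x)P(dx)
= \int_{\mathbb{Y}} \tilde c(y,\gamma(y))\,P_Y(dy),
\qquad
\tilde c(y,u) := \int_{\mathbb{X}} c(x,u)\,P_{X|Y}(dx|y).
\]

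Next I would verify that $\tilde c$ is a Carath\'eodory function, i.e.\ measurable in $y$ for each $u$ and continuous in $u$ for each $y$. Measurability in $y$ follows from the standard fact that kernel integrals of bounded measurable functions are measurable. Continuity in $u$ for fixed $y$ follows from assumption \textsc{A3} (pointwise continuity of $c(x,\cdot)$) combined with boundedness and dominated convergence, applied inside the integral against $P_{X|Y}(\,\cdot\,|y)$. Since $\mathbb{U}$ is compact by \textsc{A4}, the continuous map $u\mapsto \tilde c(y,u)$ attains its minimum on $\mathbb{U}$ for each $y$, so the pointwise value function $v(y) := \min_{u\in \mathbb{U}} \tilde c(y,u)$ is well-defined.

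To produce an admissible policy, I would apply a standard measurable selection theorem for Carath\'eodory integrands on a compact action set (for example, the Kuratowski--Ryll-Nardzewski selector, or the version of the measurable maximum theorem in Hern\'andez-Lerma and Lasserre that is already cited in the paper): there exists a Borel measurable $\gamma^{*}:\mathbb{Y}\to\mathbb{U}$ such that $\tilde c(y,\gamma^{*}(y)) = v(y)$ for every $y$. Integrating against $P_Y$ and comparing with any other $\gamma\in\mathcal{G}$, which satisfies $\tilde c(y,\gamma(y))\ge v(y)$ pointwise, yields $\int \tilde c(y,\gamma^{*}(y))\,P_Y(dy) \le \int \tilde c(y,\gamma(y))\,P_Y(dy)$, so $\gamma^{*}$ attains the infimum.

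The only real obstacle is the invocation of the selection theorem, which requires the Carath\'eodory property of $\tilde c$ on the product of a measurable space with a compact metric space; this is why both \textsc{A3} (joint boundedness with $u$-continuity slice-wise) and \textsc{A4} (compactness of $\mathbb{U}$) are needed. Once continuity of $\tilde c(y,\cdot)$ is established via dominated convergence, everything else is bookkeeping.
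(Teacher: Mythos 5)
Your proposal is correct and follows essentially the same route as the paper's own proof: disintegrate $PQ$ into the $Y$-marginal and the conditional $P(dx|y)$, use boundedness and dominated convergence (assumption \textsc{A3}) to get continuity of $u\mapsto \int c(x,u)P(dx|y)$, and then use compactness of $\mathbb{U}$ (assumption \textsc{A4}) together with a standard measurable selection argument (as in Hern\'andez-Lerma and Lasserre) to obtain a measurable minimizing policy $\gamma^*$. No gaps to report.
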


\textsc{Remark.}\   The assumptions that $c$ is bounded and
$\mathbb{U}$ is compact can be weakened in the preceding
theorem. For example, one can prove the same result by assuming that
$\mathbb{U} =\mathbb{R}^k$, $\lim_{\|u\|\to \infty} c(x,u)=\infty$ for
  all $x$, $c(x,u)$ is lower semi-continuous on $\mathbb{U}$ for every $x$, and there exists $u_0$ such that $\int
  c(x,u_0)P(dx)<\infty$.

\subsection{Weak convergence}

\subsubsection{Absence of continuity under weak convergence}

The following counterexample demonstrates that $J(P,Q)$ may not be
continuous under weak convergence of channels even for
continuous cost functions and compact $\mathbb{X}$, $\mathbb{Y}$, and
$\mathbb{U}$. Note that the absence of continuity here is also implied
by a less elementary counterexample for setwise convergence in
Section~\ref{sec_disc_setwise}.

Let $\mathbb{X}= \mathbb{Y}=\mathbb{U}=[a,b]$ for some
$a,b\in \mathbb{R}$, $a<b$. Suppose the cost is given as
$c(x,u)=(x-u)^2$
 and assume that $P$ is a discrete distribution with two atoms:
\[
P=\frac{1}{2}\delta_{a} + \frac{1}{2}\delta_{b},
\]
where $\delta_{a}$ is the delta measure at point $a$, that is,
$\delta_a(A)=1_{\{a \in A\}}$ for every Borel set $A$, where
$1_E$ denotes the indicator function of  event $E$.
Let $\{Q_n\}$ be a sequence of channels given by
\begin{equation}
Q_n(\,\cdot\,|x)=
\begin{cases} \delta_{a + \frac{1}{n}} & \text{if} \quad x \geq a+ \frac{1}{n},
\\
\delta_{a} & \text{if} \quad x < a+ \frac{1}{n}.
\end{cases}
\end{equation}

In this case, the optimal control policy, which is unique up to
changes in points of measure zero, is
\[
\gamma_n(y)= a1_{\{y < a +
  \frac{1}{n}\}} + b1_{\{y \geq a + \frac{1}{n}\}}, \quad n \in
\mathbb{N}, \quad n \geq \frac{1}{b-a},
\]
leading to a cost of $0$. We observe that the limit of the sequence
$\{Q_n(\,\cdot\,|x)\}$ is given by
\begin{equation}
Q(\,\cdot\,|x)= \delta_{a} \quad \text{for all $x \in \mathbb{R}$}.
\end{equation}
Thus, by Lemma~\ref{UniversalInputCompactness}, $Q_n\to Q$ weakly at
input $P$. However, the limit of the sequence of channels cannot distinguish between the inputs, since the channel output always equals $a$. Thus, even though
\[
J(P,Q_n) = 0, \quad   \text{for all $n \geq \dfrac{1}{b-a}$,}
\]
the cost of $Q=\lim_n Q_n$ is
\[
J(P,  Q) = \frac{(b-a)^2}{4}
\]
since, letting  $(X,Y)\sim PQ$, we have
$\gamma(y)=E[X|Y=y]=(b+a)/2$ for all $y$.  \qed

\subsubsection{Upper semi-continuity under weak convergence}
\mbox{}

\begin{theorem}\label{USCWeak}
Suppose assumptions \textsc{A1}  and \textsc{A5} hold.
If $\{Q_n\}$ is a sequence of channels converging weakly
at input $P$ to a channel $Q$, then
\[
  \limsup_{n\to \infty} J(P,Q_n) \le J(P,Q),
\]
that is, $J(P,Q)$ is upper semi-continuous on ${\cal Q}$ under
 weak convergence.
\end{theorem}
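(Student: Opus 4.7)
The plan is to exploit the fact that for a continuous policy $\tilde\gamma$, the integrand $(x,y)\mapsto c(x,\tilde\gamma(y))$ is continuous and bounded on $\mathbb{X}\times\mathbb{Y}$, so weak convergence $PQ_n\to PQ$ would directly give convergence of the corresponding integrals. The strategy is therefore (i) to show that the value $J(P,Q)$ can be arbitrarily well approximated by continuous policies, and then (ii) to upper-bound $J(P,Q_n)$ uniformly by the cost of a single near-optimal continuous policy whose limiting performance equals (up to $\varepsilon$) the performance of an optimal policy for $Q$.

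For step (i), fix $\varepsilon>0$, use Theorem~\ref{InfimumMinimum}-type reasoning or simply definition of infimum to pick a measurable $\gamma^\ast:\mathbb{Y}\to\mathbb{U}$ with $\int c(x,\gamma^\ast(y))\,PQ(dx,dy)\le J(P,Q)+\varepsilon$. Then invoke Lusin's theorem on the $Y$-marginal $\mu_Y$ of $PQ$: since $\mu_Y$ is a Borel probability measure on the Polish space $\mathbb{Y}$, there is a compact $K\subset\mathbb{Y}$ with $\mu_Y(\mathbb{Y}\setminus K)<\varepsilon$ on which $\gamma^\ast$ is continuous. Extend $\gamma^\ast|_K$ to a continuous $\tilde\gamma:\mathbb{Y}\to\mathbb{U}$ via the Tietze--Dugundji extension theorem; here assumption \textsc{A5} is essential, since Dugundji's theorem delivers an extension whose range lies in the convex hull of $\gamma^\ast(K)\subset\mathbb{U}$, which by convexity stays inside $\mathbb{U}$. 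Because $c$ is nonnegative and bounded by some $M$ (assumption \textsc{A1}), and because $\tilde\gamma=\gamma^\ast$ on $K$,
\begin{eqnarray*}
\int c(x,\tilde\gamma(y))\,PQ(dx,dy)
&\le & \int c(x,\gamma^\ast(y))\,PQ(dx,dy) + M\mu_Y(\mathbb{Y}\setminus K)\\
&\le & J(P,Q)+\varepsilon(1+M).
\end{eqnarray*}

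For step (ii), $\tilde\gamma$ is continuous and $c$ is continuous and bounded on $\mathbb{X}\times\mathbb{U}$, so $h(x,y):=c(x,\tilde\gamma(y))$ is a continuous bounded function on $\mathbb{X}\times\mathbb{Y}$. The weak convergence $PQ_n\to PQ$ then yields
\[
\lim_{n\to\infty}\int h(x,y)\,PQ_n(dx,dy)=\int h(x,y)\,PQ(dx,dy).
\]
Since $J(P,Q_n)\le \int h(x,y)\,PQ_n(dx,dy)$ for every $n$, combining with the bound from step (i) gives
\[
\limsup_{n\to\infty}J(P,Q_n)\le J(P,Q)+\varepsilon(1+M),
\]
and letting $\varepsilon\downarrow 0$ proves the claim.

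The main obstacle is the continuous-approximation step: Lusin's theorem alone produces a continuous function only on a compact set, and to obtain a globally defined continuous policy one must extend without leaving $\mathbb{U}$. This is precisely where \textsc{A5} is indispensable—without convexity of $\mathbb{U}$ one cannot in general guarantee that the extension remains admissible, and the whole argument collapses. The weak-convergence step is, by contrast, a standard application of the Portmanteau theorem once continuity of the integrand has been arranged.
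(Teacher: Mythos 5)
Your proof is correct and follows essentially the same route as the paper: a near-optimal measurable policy for $Q$ is replaced by a continuous $\mathbb{U}$-valued policy via Lusin's theorem (with convexity of $\mathbb{U}$, assumption \textsc{A5}, guaranteeing admissibility --- the paper composes a globally continuous Lusin approximation with the metric projection onto $\mathbb{U}$, where you use the Dugundji extension from the compact set $K$), and then weak convergence of $PQ_n$ to $PQ$ is applied to the continuous bounded integrand $c(x,\tilde\gamma(y))$. The only cosmetic difference is that the paper first proves $\inf_{\gamma\in\mathcal{C}}=\inf_{\gamma\in\mathcal{G}}$ for an arbitrary joint measure and then interchanges $\limsup$ and $\inf$, whereas you bound $J(P,Q_n)$ directly by the cost of a single near-optimal continuous policy for $Q$; the substance is identical.
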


\begin{proof}  Let $\mu$ be an arbitrary probability measure on $(
\mathbb{X}\times \mathbb{Y}, \mathcal{B}(\mathbb{X}\times
\mathbb{Y}))$ and let $\mu_{\mathbb{Y}}$ be its  second marginal,
i.e., $\mu_{\mathbb{Y}}(A)= \mu(\mathbb{X}\times A)$ for  $A\in
\mathcal{B}(\mathbb{Y})$. Let $g\in \mathcal{G}$ be arbitrary.
By Lusin's theorem \cite[Thm.~2.24]{Rud87} there is a continuous
function\footnote{Lusin's theorem as stated in \cite{Rud87} implies
  the statement for $\mathbb{U}=\mathbb{R}$. The extension to the case
  $\mathbb{U}=\mathbb{R}^K$ is straightforward. If $\mathbb{U}$ is any
  closed and convex subset of $\mathbb{R}^K$, then there is a
  continuous function $\pi: \mathbb{R}^K \to \mathbb{U}$ such that
  $\pi(u)=u$ on $\mathbb{U}$ (the metric projection onto
  $\mathbb{U}$). Then $\hat{f}= \pi\circ f$ is the desired continuous
  mapping from $\mathbb{Y}$ into $\mathbb{U}$. } $f: \mathbb{Y}\to
\mathbb{U}$ such that
\[
  \mu_{\mathbb{Y}}\{y: \, f(y)\neq g(y)\} < \epsilon.
\]
Letting  $B=\{y: \, f(y)\neq g(y)\}$ we obtain
\begin{eqnarray*}
  \int\bigl| c(x,g(y)) -  c(x,f(y))\bigr| \mu(dx,dy)
  &= &  \int_{\mathbb{X}\times
    B}\bigl| c(x,g(y))-
  c(x,f(y))\bigr| \mu(dx,dy) \\
&< & \epsilon\cdot c^*,
\end{eqnarray*}
where $c^*= \sup_{x,u} c(x,u)<\infty$ by assumption \textsc{A1}, so
that
\begin{equation}
\label{eps_approx}
\int c(x,f(y)) \mu(dx,dy) <   \int c(x,g(y)) \mu(dx,dy) + c^*\epsilon.
\end{equation}
Let $\mathcal{C}$ be the set of continuous functions from 
$\mathbb{Y}$ into $\mathbb{U}$,  define
\[
j(\mu,\mathcal{C})= \inf_{\gamma \in \mathcal{C}}  \int c(x,\gamma(y))
\mu(dx,dy), \qquad j(\mu,\mathcal{G})= \inf_{\gamma\in \mathcal{G}}
\int c(x,\gamma(y)) 
\mu(dx,dy)
\]
and note that $j(\mu,\mathcal{C})\ge j(\mu,\mathcal{G})$ since
$\mathcal{C}\subset \mathcal{G}$.  By \eqref{eps_approx},
$j(\mu,\mathcal{C})$ is upper bounded by the right-hand-side of
\eqref{eps_approx}. Since $g$ in \eqref{eps_approx} was arbitrary, we
obtain $j(\mu,\mathcal{C})\le j(\mu,\mathcal{G})+c^*\epsilon $, which
in turn implies $j(\mu,\mathcal{C})\le j(\mu,\mathcal{G})$ since
$\epsilon>0$ was arbitrary. Hence $j(\mu,\mathcal{C})=
j(\mu,\mathcal{G})$.

Applying the above first to $PQ_n$ and then to $PQ$, we obtain
\begin{eqnarray*}
  \limsup_{n\to \infty}\inf_{\gamma\in \mathcal{G}} \int c(x,\gamma(y))
PQ_n(dx,dy)& = &   \limsup_{n\to \infty}\inf_{f\in \mathcal{C}}
\int c(x,f(y))
PQ_n(dx,dy) \\
&\le &    \inf_{f \in \mathcal{C}}  \limsup_{n\to \infty}
\int c(x,f(y)) PQ_n(dx,dy) \\
&=&   \inf_{f \in \mathcal{C}} \int c(x,f(y)) PQ(dx,dy) \\
&=&   \inf_{\gamma \in \mathcal{G}} \int c(x,\gamma(y)) PQ(dx,dy)
\end{eqnarray*}
where the next to last equality holds since $PQ_n$ converges weakly
to $PQ$.
\end{proof}

\subsection{Continuity properties under setwise convergence}

\subsubsection{Absence of continuity under setwise convergence}
\label{sec_disc_setwise}
The following counterexample demonstrates that $J(P,Q)$ may not be
continuous under setwise convergence of channels even for
continuous cost functions and compact $\mathbb{X}$, $\mathbb{Y}$, and
$\mathbb{U}$.

Let $\mathbb{X}=\mathbb{Y}=\mathbb{U}=[0,1]$. Assume that $X$ has
distribution
\[
P=\frac{1}{2}\delta_0 + \frac{1}{2} \delta_1.
\]
Let $Q(\,\cdot\,|x)= U([0,1])$ for all $x$,  so that if $(X,Y)\sim PQ$, then
$Y$ is independent of $X$ and has  the uniform
distribution on $[0,1]$.  Let $c(x,u)=(x-u)^2$.

By independence, $E[X|Y]=E[X]=1/2$, so
\begin{eqnarray*}
J(P,Q) &= &  \min_{\gamma\in   {\cal G}}E[(X-\gamma(Y))^2]=
E[(X-E[X|Y])^2] \\
 & =&  \frac{1}{2}\left(1-\frac{1}{2}\right)^2+
\frac{1}{2}\left(0-\frac{1}{2}\right)^2 = \frac{1}{4}.
\end{eqnarray*}

For $n\in \mathbb{N}$ and $k=1,\ldots,n$  consider the intervals
\begin{eqnarray}\label{star}
  L_{nk}= \left[\frac{2k-2}{2n},\frac{2k-1}{2n}\right), \quad
    R_{nk}= \left[\frac{2k-1}{2n},\frac{2k}{2n}\right)
\end{eqnarray}
and define  the ``square wave'' function
\[
  h_n(t) = \sum_{k=1}^n\bigl(1_{\{t \in L_{nk}\}} - 1_{\{t \in R_{nk}\}} \bigr).
\]
Since $\int_0^1 h_n(t)\, dt=0$ and $|h_n(t)|\leq 1$, the function
\[
  f_n(t)= \bigl(1+h_n(t) \bigr)1_{\{t \in [0,1]\}}
\]
is a probability density function.  Furthermore, the proof of the
Riemann-Lebesgue lemma (for example \cite{WhZy77}, Thm.\ 12.21) can be used
almost verbatim to show  that
\begin{eqnarray}
\lim_{n\to \infty}  \int_0^1 h_n(t) g(t)\, dt =0 \quad \text{for all
  $g\in L_1([0,1],\mathbb{R})$} \nonumber
\end{eqnarray}
and therefore
\begin{eqnarray}\label{Riemann}
\lim_{n\to \infty} \int_0^1 f_n(t) g(t)\, dt =\int_0^1 g(t)\, dt \quad
\text{for all $g\in L_1([0,1],\mathbb{R})$}.
\end{eqnarray}
In particular, we obtain that the sequence of  probability measures
induced by the sequence $\{f_n\}$ converges
setwise to $U([0,1])$.

Now, for every $n$, define a channel as
\[
Q_n(\,\cdot\,|x)=
\begin{cases}
  U([0,1]), & x=0\\
   \sim f_n, & x=1.
\end{cases}
\]
Then $Q_n(\cdot|x)\to Q$ setwise for $x=0$ and $x=1$, and thus $P
Q_n\to P U([0,1])$ setwise. However, letting $(X,Y_n)\sim P
Q_n$, a simple calculation shows that the optimal policy for $PQ_n$ is
\[
\gamma_n(y)= E[X|Y_n=y] =
  \begin{cases}
    0, &  y\in \bigcup_{k=1}^n R_{nk} \\
    \frac{2}{3},  &  y\in \bigcup_{k=1}^n L_{nk}
  \end{cases}
\]
and therefore for every $n \in \mathbb{N}$
\begin{eqnarray*}
  J(P,Q_n)&=&  \min_{\gamma\in
  {\cal G}}E[(X-\gamma(Y_n))^2]  \\
&= &  \frac{1}{2} \int_0^1 (0-\gamma_n(y))^2\, dy +
 \frac{1}{2} \int_0^1 (1-\gamma_n(y))^2f_n(y) \, dy \\
  &=& \frac{1}{6}.
\end{eqnarray*}
Thus, the optimal cost value is not continuous under setwise
convergence.  \qed

\smallskip

\subsubsection{Upper semi-continuity under setwise convergence}

\mbox{}

\begin{theorem}\label{thm11}
Under assumption \textsc{A2} the optimal cost
\[
J(P,Q) := \inf_{\gamma} E^{Q,\gamma}_{P} [c(X,U)]
\]
is sequentially upper  semi-continuous on  the set of communication
channels $\mathcal{Q}$  under setwise convergence.
\end{theorem}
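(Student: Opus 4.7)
The plan is to exploit directly the defining property of setwise convergence: since $PQ_n\to PQ$ setwise, the integrals of all bounded measurable functions on $\mathbb{X}\times \mathbb{Y}$ converge. Unlike the weak-convergence argument (Theorem~\ref{USCWeak}), no approximation of measurable policies by continuous ones via Lusin's theorem is needed, which is exactly why assumption \textsc{A1} can be replaced by the weaker \textsc{A2} and why no structural assumption on $\mathbb{U}$ is required.

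The argument proceeds by comparing to a near-optimal policy for the limit channel. Fix $\varepsilon > 0$ and choose $\gamma_\varepsilon \in \mathcal{G}$ such that
\[
   \int c(x,\gamma_\varepsilon(y))\, PQ(dx,dy) \le J(P,Q) + \varepsilon.
\]
Define $\varphi(x,y) = c(x,\gamma_\varepsilon(y))$. Since $c$ is jointly measurable and $\gamma_\varepsilon$ is measurable, $\varphi$ is measurable; by \textsc{A2} it is bounded. Setwise convergence of $PQ_n$ to $PQ$ is equivalent to convergence of integrals of bounded measurable functions, hence
\[
  \lim_{n\to\infty} \int \varphi(x,y)\, PQ_n(dx,dy) = \int \varphi(x,y)\, PQ(dx,dy).
\]

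Now bound $J(P,Q_n)$ from above by the cost of this fixed policy:
\[
   J(P,Q_n) \le \int c(x,\gamma_\varepsilon(y))\, PQ_n(dx,dy).
\]
Taking $\limsup_n$ on both sides and using the previous display,
\[
   \limsup_{n\to\infty} J(P,Q_n) \le \int c(x,\gamma_\varepsilon(y))\, PQ(dx,dy) \le J(P,Q)+\varepsilon.
\]
Letting $\varepsilon \downarrow 0$ yields the claim. The only potentially delicate point is ensuring joint measurability of $(x,y)\mapsto c(x,\gamma_\varepsilon(y))$, which follows immediately because it is the composition of the Borel measurable $c$ with the Borel measurable map $(x,y)\mapsto(x,\gamma_\varepsilon(y))$; this is routine and is not a genuine obstacle. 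Thus the proof is essentially a one-line exchange of $\inf$ and $\limsup$, made legitimate by the defining property of setwise convergence applied to a single bounded measurable integrand.
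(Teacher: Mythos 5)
Your proof is correct and is essentially the same argument as the paper's: the paper's one-line exchange $\limsup_n \inf_\gamma \le \inf_\gamma \limsup_n$ followed by setwise convergence of $\int c(x,\gamma(y))\,PQ_n$ is exactly your fixed $\varepsilon$-optimal-policy comparison, just written more compactly. No further comment is needed.
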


\begin{proof} Let $\{Q_n\}$ converge setwise to $Q$ at input $P$. Then
\begin{eqnarray*}
  \limsup_{n\to \infty}\inf_{\gamma\in \mathcal{G}} \int c(x,\gamma(y))
PQ_n(dx,dy) &\le &    \inf_{\gamma\in \mathcal{G}}  \limsup_{n\to \infty}
\int c(x,\gamma(y)) PQ_n(dx,dy) \\
&=&   \inf_{\gamma\in \mathcal{G}}  \int c(x,\gamma(y)) PQ(dx,dy),
\end{eqnarray*}
where the equality holds since $c$ is bounded.
\end{proof}

\subsection{Continuity under total variation}

\mbox{}

\begin{theorem}\label{thm13}
Under assumption \textsc{A2}  the optimal cost
$J(P,Q)$ is is continuous on the set of communication channels ${\cal
  Q}$ under  under the topology of total variation.
\end{theorem}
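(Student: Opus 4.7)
The plan is to exploit the uniform bound that total variation convergence provides on expectations of bounded functions. Under assumption \textsc{A2}, the function $c$ is measurable and bounded by some $c^* := \sup_{x,u} c(x,u) < \infty$. Hence for every policy $\gamma \in \mathcal{G}$, the map $(x,y) \mapsto c(x,\gamma(y))$ is measurable and bounded in absolute value by $c^*$. By the alternative representation \eqref{TValternative} of the total variation distance,
\[
\left|\int c(x,\gamma(y))\, PQ_n(dx,dy) - \int c(x,\gamma(y))\, PQ(dx,dy)\right| \le c^*\, \|PQ_n - PQ\|_{TV}.
\]
The crucial point is that the right-hand side does not depend on $\gamma$, so the convergence $\int c(x,\gamma(y))\, PQ_n(dx,dy) \to \int c(x,\gamma(y))\, PQ(dx,dy)$ is \emph{uniform} over all measurable $\gamma:\mathbb{Y}\to \mathbb{U}$.

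From uniform convergence of a family of real numbers indexed by $\gamma$, convergence of the infima is immediate: given $\epsilon>0$, take $n$ large enough so that $c^*\|PQ_n - PQ\|_{TV} < \epsilon$, and then
\[
\int c(x,\gamma(y))\, PQ(dx,dy) - \epsilon \;\le\; \int c(x,\gamma(y))\, PQ_n(dx,dy) \;\le\; \int c(x,\gamma(y))\, PQ(dx,dy) + \epsilon
\]
for every $\gamma$. Taking $\inf_{\gamma\in \mathcal{G}}$ on all three expressions yields $|J(P,Q_n) - J(P,Q)| \le \epsilon$, establishing $J(P,Q_n)\to J(P,Q)$.

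There is essentially no main obstacle here. The only subtlety worth flagging is precisely what distinguishes this argument from the setwise case (Theorem~\ref{thm11}, where only upper semi-continuity survives): the control of the difference by a quantity independent of $\gamma$. Under setwise convergence one has pointwise (in $\gamma$) convergence without a uniform rate, so only the limsup-inf exchange yields one-sided continuity, whereas here the uniform estimate lets us exchange infimum and limit in both directions.
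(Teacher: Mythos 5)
Your proof is correct and uses the same key idea as the paper: the bound $|\int c\,dPQ_n-\int c\,dPQ|\le \|c\|_\infty\|PQ_n-PQ\|_{TV}$, uniform in the policy $\gamma$, from the representation \eqref{TValternative}. The paper routes the conclusion through $\epsilon$-optimal policies for $Q_n$ and $Q$ rather than your (slightly cleaner) observation that a uniform bound on the differences bounds the difference of the infima, but this is the same argument in substance.
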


\begin{proof}  Assume $Q_n\to Q$ in total variation at input $P$.  Let
$\epsilon>0$ and pick the $\epsilon$-optimal policies $\gamma_n$ and
$\gamma$ under channels $Q_n$ and $Q$, respectively. That is, letting
$\hat{J}(Q',\gamma')=E^{Q',\gamma'}_{P} [c(X,U)]$ for any $\gamma'\in
\mathcal{G}$ and $Q'\in \mathcal{Q}$, we have $ \hat{J}(Q_n,\gamma_n)
< J(P,Q_n) +\epsilon$ and $\hat{J}(Q,\gamma) < J(P,Q) +\epsilon$.

Considering first the case $J(P,Q_n)< J(P,Q)$,
we have
\begin{eqnarray*}
J(P,Q)-J(P,Q_n)&\le & J(P,Q) - \hat{J}(Q_n,\gamma_n)+\epsilon\\
& \leq & \hat{J}(Q,\gamma_n) - \hat{J}(Q_n,\gamma_n)+\epsilon.
\end{eqnarray*}
By a symmetric argument it
follows that
\begin{equation}\label{s33}
|J(P,Q)-J(P,Q_n)| \leq \max \bigl(\hat{J}(Q,\gamma_n) -
\hat{J}(Q_n,\gamma_n) , \hat{J}(Q_n,\gamma) - \hat{J}(Q,\gamma)
\bigr)+  \epsilon
\end{equation}
Now, since $c$ is bounded, it follows from (\ref{TValternative}) that
for any $\gamma'\in \mathcal{G}$,
\begin{eqnarray*}
|\hat{J}(Q_n,\gamma')-\hat{J}(Q,\gamma')| &=&
\bigg| \int    c(x,\gamma'(y)) PQ_n(dx,dy)  - \int
c(x,\gamma'(y))  PQ(dx,dy) \bigg| \\
& \leq & \|c\|_{\infty} \|PQ_n -
PQ\|_{TV}.
\end{eqnarray*}
This and (\ref{s33}) imply $|J(P,Q_n)- J(P,Q)|\le \|c\|_{\infty}
\|PQ_n - PQ\|_{TV} + \epsilon $. Since $\epsilon>0$ was arbitrary,
we obtain $|J(P,Q_n)- J(P,Q)|\le \|c\|_{\infty} \|PQ_n - PQ\|_{TV}$.
Since $ \|PQ_n - PQ\|_{TV}\to 0$ by
Lemma~\ref{UniversalInputCompactness}, we obtain $J(P,Q_n)\to J(P,Q)$
as claimed.
\end{proof}

\section{Problem {\bf P2}: Existence of optimal channels}\label{sectionExistence}

Here we study characterizations of  compactness which will be useful
in obtaining existence results. 

The discussion on weak convergence showed us that weak convergence
does not induce a strong enough topology, i.e., under which useful
continuity properties can be obtained. In the following, we will
obtain conditions for compactness for the other two convergence
notions, that is, for setwise convergence and total variation. We note
that in  the topologies induced by these three  modes of convergence, notions
of compactness and sequential compactness
coincide (for total variation and weak convergence this follows from
metrizability; for setwise convergence see
\cite[Thm.\ 4.7.25]{BogI07}).

We first discuss setwise convergence. A set of probability measures
$\mathcal{M}$ on some measurable space is said to be \emph{setwise
precompact} if every sequence in $\mathcal{M}$ has a
subsequence converging setwise to a probability measure (not
necessarily in $\mathcal{M}$). For two finite measures $\nu$ and $\mu$
defined on the same measurable space we write $\nu\le \mu$ if
$\nu(A)\le \mu(A)$ for all measurable $A$.

We have the the following condition for setwise (pre)compactness:

\begin{lemma}[{\cite[Thm.\ 4.7.25]{BogI07}}]  \label{Dunford}
Let $\mu$ be a finite measure on a measurable space
$(\mathbb{T},\mathcal{A})$. Assume a set of probability measures
$\Psi \subset {\cal 
  P}(\mathbb{T})$ satisfies
\[
P\le  \mu, \quad \text{for all $P\in \Psi$}.
\]
Then $\Psi$  is setwise precompact.
\end{lemma}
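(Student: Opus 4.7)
The plan is to translate the setwise precompactness of $\Psi$ into weak sequential compactness of the associated Radon--Nikodym derivatives in $L^1(\mu)$, and then invoke the Dunford--Pettis theorem. The hypothesis $P \le \mu$ for all $P \in \Psi$, together with finiteness of $\mu$, ensures that every $P \in \Psi$ is absolutely continuous with respect to $\mu$, so each $P$ possesses a density $f_P := dP/d\mu$ satisfying $0 \le f_P \le 1$ $\mu$-a.e.

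First I would verify that the family $\mathcal{F} := \{f_P : P \in \Psi\}$ is uniformly integrable in $L^1(\mu)$: the pointwise bound $f_P \le 1$ gives $\int_A f_P\, d\mu \le \mu(A)$ uniformly in $P$, which yields uniform integrability immediately (and also the $L^1$ bound $\|f_P\|_1 = P(\mathbb{T}) = 1$). Second, I would appeal to the Dunford--Pettis theorem, which states that a bounded uniformly integrable subset of $L^1(\mu)$ is relatively weakly compact. Together with the Eberlein--\v Smulian theorem this means that any sequence $\{f_{P_n}\} \subset \mathcal{F}$ admits a subsequence $\{f_{P_{n_k}}\}$ converging weakly in $L^1(\mu)$ to some $f \in L^1(\mu)$. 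Third, I would define $\nu(A) := \int_A f\, d\mu$ and test the weak convergence against indicator functions $g = 1_A \in L^{\infty}(\mu)$ to obtain
\[
P_{n_k}(A) \;=\; \int 1_A\, f_{P_{n_k}}\, d\mu \;\longrightarrow\; \int 1_A\, f\, d\mu \;=\; \nu(A)
\]
for every $A \in \mathcal{A}$, which is exactly setwise convergence $P_{n_k} \to \nu$.

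Finally, I would verify that the limit $\nu$ is a probability measure: nonnegativity of $f$ $\mu$-a.e.\ follows by testing weak convergence against $g = 1_{\{f < 0\}} \in L^{\infty}(\mu)$ (a weak $L^1$ limit of nonnegative functions is nonnegative $\mu$-a.e.); countable additivity of $\nu$ is automatic from its integral definition via dominated convergence; and $\nu(\mathbb{T}) = \lim_k P_{n_k}(\mathbb{T}) = 1$. The main obstacle in principle is the Dunford--Pettis step, but our hypothesis $P \le \mu$ trivializes uniform integrability via the $L^{\infty}$ bound $f_P \le 1$, so the argument amounts to a direct reduction from setwise convergence of measures to weak $L^1$-convergence of their densities against a fixed dominating finite measure.
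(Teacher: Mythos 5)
Your proof is correct. Note, however, that the paper does not prove this lemma at all: it is quoted directly from Bogachev \cite[Thm.\ 4.7.25]{BogI07}, so there is no in-paper argument to compare against. Your route---observing that $P\le\mu$ forces $P\ll\mu$ with density $f_P=dP/d\mu$ satisfying $0\le f_P\le 1$ $\mu$-a.e., hence uniform integrability, then applying Dunford--Pettis together with Eberlein--\v{S}mulian to extract a weakly $L^1(\mu)$-convergent subsequence of densities, and finally testing against indicators to get setwise convergence to a limit that is checked to be a probability measure---is precisely the standard argument underlying the cited compactness criterion, and every step (absolute continuity, the a.e.\ bound on $f_P$, identification of the weak $L^1$ limit with a measure, nonnegativity of the limit density, and total mass one) is carried out correctly and in the full generality of an abstract measurable space, which is what the lemma requires.
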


As before, $PQ \in {\cal P}(\mathbb{X} \times {\mathbb{Y}})$
denotes the joint probability measure induced by input $P$ and channel
$Q$, where $\mathbb{X}=\mathbb{R}^n$ and $\mathbb{Y}=\mathbb{R}^m$.  A simple consequence of the preceding majorization criterion is
the following.

\begin{lemma}\label{SetwiseCompact}
Let $\nu$ be a finite measure on ${\cal B}(\mathbb{X}\times
\mathbb{Y})$ and let $P$ be a probability measure on ${\cal
B}(\mathbb{X})$. Suppose $\mathcal{Q}$ is a set of channels
such that
\[
PQ \leq \nu, \quad \text{for all $Q\in \mathcal{Q}$}.
\]
Then $\mathcal{Q}$ is setwise  precompact at input $P$ in
the sense that any sequence in $\mathcal{Q}$ has a subsequence
$\{Q_n\}$ such that $Q_n\to Q$ setwise at input $P$ for some channel
$Q$.
\end{lemma}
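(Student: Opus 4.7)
The plan is to apply the Dunford-type criterion of Lemma \ref{Dunford} directly to the family of joint measures induced by $\mathcal{Q}$, and then recover a channel by disintegration.

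First I would set $\Psi = \{PQ : Q \in \mathcal{Q}\} \subset \mathcal{P}(\mathbb{X}\times\mathbb{Y})$. The hypothesis $PQ \le \nu$ for all $Q \in \mathcal{Q}$ means exactly that the majorization condition of Lemma \ref{Dunford} holds on the measurable space $(\mathbb{X}\times\mathbb{Y}, \mathcal{B}(\mathbb{X}\times\mathbb{Y}))$. Thus $\Psi$ is setwise precompact: given any sequence $\{Q_n\} \subset \mathcal{Q}$, there is a subsequence (which I relabel as $\{Q_n\}$) and a probability measure $\mu$ on $\mathcal{B}(\mathbb{X}\times\mathbb{Y})$ such that $PQ_n \to \mu$ setwise.

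Next I would verify that $\mu$ is of the form $PQ$ for some channel $Q$. For any $A \in \mathcal{B}(\mathbb{X})$, setwise convergence applied to the rectangle $A \times \mathbb{Y}$ gives
\[
 \mu(A \times \mathbb{Y}) = \lim_{n\to\infty} PQ_n(A \times \mathbb{Y}) = P(A),
\]
so the first marginal of $\mu$ equals $P$. Since $\mathbb{Y} \subset \mathbb{R}^m$ is Borel and therefore a standard Borel space, $\mu$ admits a regular conditional probability (disintegration): there exists a stochastic kernel $Q$ from $\mathbb{X}$ to $\mathbb{Y}$ such that
\[
 \mu(B) = \int_{\mathbb{X}\times\mathbb{Y}} 1_B(x,y)\, Q(dy|x) P(dx) = PQ(B), \qquad B \in \mathcal{B}(\mathbb{X}\times\mathbb{Y}).
\]
Then $PQ_n \to PQ$ setwise, which is precisely the claim that $Q_n \to Q$ setwise at input $P$.

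The only nontrivial step is the disintegration, but this is a standard consequence of the fact that Borel subsets of Euclidean spaces are standard Borel spaces; the setwise precompactness itself is an immediate instance of Lemma \ref{Dunford}. No other obstacle is foreseen.
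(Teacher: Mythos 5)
Your proposal is correct and follows essentially the same route as the paper: apply Lemma \ref{Dunford} to the family $\{PQ:\,Q\in\mathcal{Q}\}$, note that setwise convergence of $PQ_n$ forces the first marginal of the limit to be $P$, and then recover the limiting channel as a regular conditional probability (disintegration) of the limit measure. Your additional remark justifying the disintegration via the standard Borel structure of $\mathbb{Y}\subset\mathbb{R}^m$ simply makes explicit a step the paper leaves implicit.
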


\begin{proof}
By Lemma \ref{Dunford}, the set of joint measures
$\mathcal{M}=\{PQ:\, Q\in \mathcal{Q}\}$ is setwise 
precompact, that is, any sequence in $\mathcal{M}$ has a subsequence
$\{PQ_n\}$ converging to some $\hat{P}$ setwise.  Furthermore, since
the first marginal of $PQ$ is $P$ for all $n$, the first marginal of
$\hat{P}$ is also $P$ (since $PQ_n(A\times \mathbb{X})\to \hat{P}(A\times
\mathbb{X})$ for all $A\in \mathcal{B}(\mathbb{X})$). Now let $Q$ be
a  regular conditional probability measure satisfying
$\hat{P}=PQ$.
\end{proof}

For a probability density function $p$ on $\mathbb{R}^N$ we let $P_p$
denote the induced probability measure: $P_p(A)=\int_A p(x)\, dx$, $A\in
\mathcal{B}(\mathbb{R}^N)$. The next lemma gives a sufficient
condition for precompactness under total variation.

\begin{lemma}\label{TVcompactmeasure}
Let $\mu$ be a finite Borel measure on  $\mathbb{R}^N$ and
let $\mathcal{F}$ be an equicontinuous and uniformly bounded  family
of probability density functions.
Define  $\Psi \subset {\cal
P}(\mathbb{R}^N)$ by
\[
\Psi =\{P_p: P_p \leq \mu, \, p\in \mathcal{F}\}.
\]
Then  $\Psi$ is precompact under  total variation.
\end{lemma}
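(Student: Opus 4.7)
The plan is to extract a subsequence converging pointwise (in fact uniformly on compacts) by Arzelà--Ascoli, then upgrade this to $L^1$ convergence using the uniform tail bound $P_p \le \mu$, and finally translate $L^1$ convergence of densities into total variation convergence via Scheff\'e's identity.

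More concretely, let $\{p_n\}$ be a sequence in $\mathcal{F}$. For each $k\in \mathbb{N}$, the restrictions $\{p_n|_{\overline{B_k}}\}$ form an equicontinuous and uniformly bounded family on the compact ball $\overline{B_k}=\{x\in\mathbb{R}^N:\|x\|\le k\}$. By Arzel\`a--Ascoli together with a standard diagonal argument, there is a subsequence $\{p_{n_j}\}$ and a function $p:\mathbb{R}^N\to[0,\infty)$ such that $p_{n_j}\to p$ uniformly on every $\overline{B_k}$. The limit $p$ is continuous, nonnegative, and bounded by the uniform bound of $\mathcal{F}$.

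The main step is to show that $\|p_{n_j}-p\|_{L^1}\to 0$. Fix $\epsilon>0$. Because $P_{p_n}\le \mu$ and $\mu$ is finite, choose $R$ so large that $\mu(B_R^c)<\epsilon$; then
\[
  \int_{B_R^c} p_{n_j}(x)\, dx = P_{p_{n_j}}(B_R^c) \le \mu(B_R^c) < \epsilon
\]
uniformly in $j$. By Fatou's lemma, $\int_{B_R^c} p\,dx \le \liminf_j \int_{B_R^c} p_{n_j}\,dx \le \epsilon$. On the compact ball $\overline{B_R}$, uniform convergence $p_{n_j}\to p$ and finiteness of Lebesgue measure on $\overline{B_R}$ give $\int_{B_R}|p_{n_j}-p|\,dx\to 0$. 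Combining,
\[
  \limsup_{j\to\infty}\int_{\mathbb{R}^N}|p_{n_j}-p|\,dx \le 2\epsilon,
\]
and since $\epsilon>0$ was arbitrary, $p_{n_j}\to p$ in $L^1$. In particular $\int p\,dx = \lim_j \int p_{n_j}\,dx =1$, so $p$ is a probability density and $P_p\in\mathcal{P}(\mathbb{R}^N)$.

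Finally, Scheff\'e's theorem (or direct computation from \eqref{TValternative}) gives $\|P_{p_{n_j}}-P_p\|_{TV}=\int|p_{n_j}-p|\,dx\to 0$, proving precompactness of $\Psi$ in total variation. The only delicate point is the tail control: equicontinuity alone would not prevent mass from escaping to infinity, and it is precisely the domination $P_p\le\mu$ that supplies the uniform tightness needed to pass from uniform-on-compacts convergence to $L^1$ convergence.
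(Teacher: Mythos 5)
Your proof is correct, but it takes a genuinely different route from the paper's. The paper first invokes its setwise precompactness criterion (Lemma~\ref{Dunford}, majorization by the finite measure $\mu$) to extract a setwise convergent subsequence $P_n\to P$, observes that $P$ must be absolutely continuous with some density $p$, then runs the Arzel\`a--Ascoli/diagonal argument to get a pointwise limit $\hat p$, identifies $\hat p=p$ a.e.\ via the setwise convergence, and finally applies Scheff\'e's theorem (a.e.\ convergence of densities to a density implies $L^1$ convergence) to conclude. You skip the setwise step and Scheff\'e entirely: after Arzel\`a--Ascoli you prove $L^1$ convergence directly by splitting $\mathbb{R}^N$ into a large ball and its complement, using the domination $P_p\le\mu$ only through the uniform tail bound $\int_{B_R^c}p_{n_j}\le \mu(B_R^c)$ together with Fatou for the limit's tail, and the fact that the limit is a probability density falls out as a corollary ($\int p=\lim\int p_{n_j}=1$). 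The paper's route is natural in context because Lemma~\ref{Dunford} is already in place and is reused elsewhere; your route is more elementary and self-contained, isolates precisely what the hypothesis $P_p\le\mu$ is really doing here (supplying uniform tightness, nothing more), and in fact establishes the slightly stronger statement that uniform tightness of $\Psi$ together with equicontinuity and uniform boundedness of $\mathcal{F}$ already yields total variation precompactness. The only cosmetic remark is that your final appeal to ``Scheff\'e'' is really just the identity between total variation distance and the $L^1$ distance of densities (up to the normalization convention in \eqref{TValternative}), which, as you note, is a direct computation rather than Scheff\'e's theorem proper.
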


\begin{proof}
By Lemma~\ref{Dunford}, $\Psi$ is setwise precompact and
thus any sequence in $\Psi$ has a subsequence $\{P_n\}$ such that $P_n
\to P$ setwise for some $P\in \mathcal{P}(\mathbb{R}^N)$.
$P$ is clearly absolutely continuous with respect to the
Lebesgue measure on $\mathbb{R}^N$, and so it admits a density $p$.

Let $p_n$ be the density of $P_n$. It suffices to show that
\begin{equation}
\label{eql1conv}
\lim_{n\to \infty} \|p_n-p\|_1 = 0
\end{equation}
since $\|p_n-p\|_{TV}=2\|p_n-p\|_1= 2\int|p_n(x)-p(x)|\, dx$.

Pick a sequence of compact sets $K_j\subset \mathbb{R}^N$ such that
$K_j\subset K_{j+1}$ for all $j\in \mathbb{N}$, and
$\bigcup_{j}K_j=\mathbb{R}^N$.  Since the collection of densities
\{$p_n$\} is uniformly bounded and equicontinuous, it is precompact in
the supremum norm on each $K_j$ by the Arzel\`a-Ascoli theorem \cite{Dud02}. Thus
there exist subsequences $\{p_{n^j_k}\}$ such that
\[
\lim_{k\to \infty} \sup_{x\in K_j} |p_{n^j_k}(x) - p^j(x)| = 0
\]
for some continuous $p^j:\, K_j\to [0,\infty)$.

Since the $K_j$ are nested, one can choose $\{p_{n^{j+1}_k}\}$ to be a
subsequence of $\{p_{n^j_k}\}$ for all $j\in \mathbb{N}$. Then
$p^{j+1}$ coincides with $p^j$ on $K_j$ and we can define $\hat{p}$ on
$\mathbb{R}^N$ by setting $\hat{p}(x)=p^j(x)$, $x\in K_j$. We can now
use Cantor's diagonal method to pick an increasing sequence of
integers $\{m_i\}$ which is a subsequence of each $\{n^j_k\}$, and thus
\begin{equation}
\label{eqpwconv}
\lim_{i\to \infty} p_{m_i}(x) = \hat{p}(x), \quad  \text{for all $x \in
  \mathbb{R}^N$}.
\end{equation}
Note that by construction the convergence is uniform on each $K_j$
(and $\hat{p}$ is continuous). By uniform convergence
$P_{p_{m_i}}(A)\to P_{\hat{p}}(A)$ for all Borel subsets $A$ of
$K_j$. The setwise convergence of $P_n$ to $P_p$ implies
$P_{p_{m_i}}(A)\to P_{p}(A)$ for all Borel sets, so we must have
$p=\hat{p}$ almost everywhere. This and  (\ref{eqpwconv}) imply via
Scheff\'e's theorem  \cite{Bil86}
that
\[
\|p_{m_j}-p\|_1\to 0
\]
which completes the proof.
\end{proof}

The next result is an analogue of Lemma \ref{SetwiseCompact} and has
an essentially identical proof.
\begin{lemma}\label{TVCompact}
Let ${\cal Q}$ be a set of channels such that  $\{PQ: Q \in {\cal Q}\}$ is
a precompact set of probability measures under total variation.
Then $\mathcal{Q}$ is precompact under total variation at input $P$.
\end{lemma}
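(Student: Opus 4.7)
The plan is to mirror the proof of Lemma~\ref{SetwiseCompact}, substituting total variation convergence for setwise convergence throughout. Concretely, given an arbitrary sequence $\{Q_n\}\subset \mathcal{Q}$, I will use the hypothesis that $\{PQ:\, Q\in \mathcal{Q}\}$ is precompact in total variation to extract a subsequence (which I continue to denote $\{PQ_n\}$) and a probability measure $\hat{P}$ on $\mathcal{B}(\mathbb{X}\times \mathbb{Y})$ with $\|PQ_n - \hat{P}\|_{TV}\to 0$.

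Next I will verify that the first marginal of $\hat{P}$ coincides with $P$. Since convergence in total variation implies setwise convergence, for every $A\in \mathcal{B}(\mathbb{X})$ we have $PQ_n(A\times \mathbb{Y})\to \hat{P}(A\times \mathbb{Y})$; but the left side equals $P(A)$ for every $n$, so $\hat{P}(A\times \mathbb{Y})=P(A)$. Since $\mathbb{X}$ and $\mathbb{Y}$ are Borel subsets of Euclidean spaces (hence standard Borel), $\hat{P}$ admits a disintegration $\hat{P}=PQ$ for some regular conditional probability kernel $Q$ from $\mathbb{X}$ to $\mathbb{Y}$, i.e., a channel in the sense of the paper.

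Finally, the relation $\|PQ_n - PQ\|_{TV}\to 0$ is by definition exactly the statement that $Q_n\to Q$ in total variation at input $P$ (Definition~\ref{ch_conv_def}(iii)), which establishes the claimed precompactness of $\mathcal{Q}$ at input $P$. There is no genuine obstacle here: the only substantive points are (a) that total variation convergence of the joint measures transfers to the marginal in the cheap direction needed above, and (b) the existence of a disintegration, both of which are standard. Thus the proof is essentially identical to that of Lemma~\ref{SetwiseCompact}, with ``setwise'' replaced by ``total variation'' in each occurrence.
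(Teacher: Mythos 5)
Your proposal is correct and follows exactly the route the paper intends: the paper states that Lemma~\ref{TVCompact} ``has an essentially identical proof'' to Lemma~\ref{SetwiseCompact}, and your argument (extract a TV-convergent subsequence of $\{PQ_n\}$, identify the first marginal of the limit as $P$ via the implied setwise convergence, disintegrate $\hat{P}=PQ$ by a regular conditional probability) is precisely that adaptation. No issues.
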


The following theorem, when combined with the preceding results, gives
sufficient conditions for the existence of best and worst channels
when the given family of channels $\mathcal{Q}$ is closed under the
appropriate convergence notion.

\begin{theorem}\label{ExistenceTheorem}
Recall problem \textsc{P2}.

\begin{romannum}

\item There exist a worst channel in $\mathcal{Q}$, that is, a
  solution for the maximization problem
\[
\sup_{Q \in {\cal Q}} J(P,Q) = \sup_{Q \in {\cal Q}} \inf_{\gamma}
E^{Q,\gamma}_{P} E[c(X,U)]
\]
when the set ${\cal Q}$ is weakly 
 compact and assumptions \textsc{A1}, \textsc{A4}, and \textsc{A5} hold.

\item There exist a worst channel in $\mathcal{Q}$ when the set ${\cal
  Q}$ is setwise  compact and assumption \textsc{A2} holds.

\item There exist best and worst channels in $\mathcal{Q}$,
  that is, solutions for the minimization problem $\inf_{Q \in {\cal
  Q}} J(P,Q)$ and the maximization problem $\sup_{Q \in {\cal Q}}
  J(P,Q)$ when the set ${\cal Q}$ is compact under total variation and
  assumption \textsc{A2} holds. \label{tvcase}
\end{romannum}
\end{theorem}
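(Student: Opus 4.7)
The plan is to combine each of the three semi-continuity/continuity results proved in Section~\ref{sectionCont} with the hypothesized compactness of $\mathcal{Q}$, using the standard fact that an upper semi-continuous real-valued function attains its supremum on a (sequentially) compact set, and a continuous function attains both its supremum and infimum. As noted in the remark preceding the theorem, compactness and sequential compactness agree for all three topologies in question, so it suffices to extract convergent subsequences from extremizing sequences. Moreover, each of the three topologies (on the quotient of $\mathcal{Q}$ by $\overset{P}\equiv$, which is where $J(P,\cdot)$ is really defined) is Hausdorff, so compact sets are closed and any subsequential limit of channels in $\mathcal{Q}$ again lies in $\mathcal{Q}$.

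For part (i), I would pick a maximizing sequence $\{Q_n\}\subset \mathcal{Q}$ with $J(P,Q_n) \to \sup_{Q\in\mathcal{Q}} J(P,Q)$. Weak compactness of $\mathcal{Q}$ gives a subsequence $\{Q_{n_k}\}$ converging weakly at input $P$ to some $Q^*\in\mathcal{Q}$. Since assumptions \textsc{A1} and \textsc{A5} are in force, Theorem~\ref{USCWeak} applies and yields $\limsup_{k} J(P,Q_{n_k})\le J(P,Q^*)$; combined with $J(P,Q_{n_k}) \to \sup_{Q\in\mathcal{Q}} J(P,Q)$ this forces equality, so $Q^*$ is a worst channel. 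Assumption \textsc{A4} enters only to guarantee via Theorem~\ref{InfimumMinimum} that the inner infimum defining $J(P,Q)$ is an actual minimum, which is implicit in the statement that a ``worst channel'' exists.

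Part (ii) follows by the same recipe with ``weakly'' replaced by ``setwise'' throughout, now invoking Theorem~\ref{thm11} to supply sequential upper semi-continuity under \textsc{A2}. For part (iii), the same extraction procedure is applied separately to a maximizing and to a minimizing sequence in $\mathcal{Q}$; the added strength is that Theorem~\ref{thm13} delivers full continuity of $J(P,\cdot)$ under total variation, so for the subsequential limit $Q^*$ we get $J(P,Q_{n_k})\to J(P,Q^*)$ in both cases, and $Q^*$ realizes the infimum or the supremum respectively.

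The only substantive obstacle is verifying that each subsequential limit $Q^*$ is genuinely a member of $\mathcal{Q}$ rather than of its closure in the ambient space of channels; this is handled uniformly by the Hausdorff-plus-compact observation above. Beyond that, the proof reduces to quoting the correct semi-continuity theorem from Section~\ref{sectionCont} for the topology at hand and appealing to the elementary fact that upper semi-continuous functions attain their supremum on compact sets.
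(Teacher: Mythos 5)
Your proposal is correct and follows essentially the same route as the paper, which likewise combines the semi-continuity/continuity results (Theorems~\ref{USCWeak}, \ref{thm11}, \ref{thm13}) with the compactness of $\mathcal{Q}$ (using the equivalence of compactness and sequential compactness noted before the theorem) to conclude that the supremum, respectively infimum, is attained; you merely spell out the extraction of extremizing subsequences that the paper leaves implicit. The only superfluous step is your Hausdorff argument: since $\mathcal{Q}$ itself is assumed (sequentially) compact, subsequential limits of sequences in $\mathcal{Q}$ automatically lie in $\mathcal{Q}$.
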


\begin{proof}
Under the stated conditions, we have  upper semi-continuity
or continuity (Theorems \ref{USCWeak}, \ref{thm11}, and \ref{thm13})
under the corresponding topologies. By compactness, the
existence of the cost maximizing (worst) channel follows when
 $J(P,Q)$ is upper-semicontinuous, while the existence of the
cost minimizing (best) channel follows when  $J(P,Q)$ is continuous in~$Q$.
\end{proof}

\textsc{Remark.}\
The existence of worst channels is useful for the robust control or
game-theoretic approach to optimization problems. If the problem is
formulated as a game where the uncertainty in the set is regarded as a
maximizer and the controller is the minimizer, one could search for a
max-min solution, which we prove to exist. One could also look for min-max
solutions, a topic which we leave as a future research topic. We note
that, in information theory, problems of similar nature have been
considered in the context of mutual information games
\cite{CoverThomasBook}.

\section{Application: quantizers as a class of channels}
\label{secapp}

Here we consider the problem of convergence and  optimization of
quantizers. We start with the definition of a quantizer.

\begin{definition} An $M$-cell {\em vector quantizer}, $q$, is a (Borel)
  measurable mapping from $\mathbb{X}=\mathbb{R}^n$ to the finite set
  $ \{1,2,\dots,M\}$, characterized by a measurable partition
  $\{B_1, B_2,\ldots,B_M\}$ such that $B_i= \{x: q(x)=i\}$ for $i
  =1,\ldots, M$. The $B_i$ are called the cells (or bins) of $q$.
\end{definition}

\textsc{Remarks.}
\begin{romannum}
\item For later convenience we allow for the possibility that
  some of the cells of the  quantizer  are empty.

\item Traditionally, in source coding theory, a quantizer is a mapping
  $q: \, \mathbb{R}^n \to \mathbb{R}$ with a finite range. Thus $q$ is
  defined by a partition and a reconstruction value in $\mathbb{R}^n$
  for each cell in the partition. That is, for given cells
  $\{B_1,\ldots,B_M\}$ and reconstruction values $\{
  c_1,\ldots,c_M\}\subset \mathbb{R}^n$, we have $q(x) = c_i$ if and
  only if $x \in
  B_i$. In our definition, we do not include the reconstruction
  values.
\end{romannum}

A quantizer $q$ with cells $\{B_1,\ldots,B_M\}$, however, can also be
characterized as a stochastic kernel $Q$ from  $\mathbb{X}$ to $\{1,\ldots,M\})$ defined by
\[
Q(i|x)= 1_{\{x \in B_i\}}, \quad  i=1,\ldots,M
\]
so that $ q(x) = \sum_{i=1}^M Q(i|x)$. We denote by ${\cal Q}_D(M)$
the space of all $M$-cell quantizers represented in the channel
form. In addition, we let ${\cal Q}(M)$ denote the set of (Borel) stochastic
kernels from  $\mathbb{X}$ to $\{1,\,\ldots,M\}$, i.e., $Q \in {\cal
  Q}(M)$ if and only if $Q(\,\cdot\,|x)$ is probability distribution
on $\{1,\ldots,M\}$ for all $x\in \mathbb{X}$, and $Q(i|\,\cdot\,)$ is
Borel measurable for all $i=1,\ldots,M$. Note that
$\mathcal{Q}_D(M)\subset \mathcal{Q}(M)$, and
by our definition $\mathcal{Q}_D(M-1)\subset
\mathcal{Q}_D(M)$ for all $M\ge 2$. We note that elements of
$\mathcal{Q}(M)$ are sometimes referred to in the literature as random
quantizers.

\begin{lemma}\label{osu}
The set of quantizers ${\cal Q}_D(M)$ is setwise 
precompact at any input $P$.
\end{lemma}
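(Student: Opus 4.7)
The plan is to derive the claim directly from Lemma~\ref{SetwiseCompact} by exhibiting a finite measure $\nu$ on $\mathcal{B}(\mathbb{X}\times\{1,\ldots,M\})$ that dominates $PQ$ uniformly over all $Q\in\mathcal{Q}_D(M)$. The natural choice is the product measure $\nu = P\otimes\mu$, where $\mu$ is the counting measure on the finite set $\{1,\ldots,M\}$; this has total mass $M$ and is therefore finite.

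Next I would verify the majorization $PQ\le\nu$. Since the output space is discrete, every Borel set $E\subset\mathbb{X}\times\{1,\ldots,M\}$ has the form $E=\bigcup_{i=1}^M E_i\times\{i\}$ for some $E_i\in\mathcal{B}(\mathbb{X})$. For a quantizer $Q\in\mathcal{Q}_D(M)$ with cells $\{B_1,\ldots,B_M\}$ one computes
\[
PQ(E)=\sum_{i=1}^M\int_{E_i}1_{\{x\in B_i\}}\,P(dx)=\sum_{i=1}^M P(E_i\cap B_i)\le\sum_{i=1}^M P(E_i)=\nu(E),
\]
which establishes the bound for every $E$.

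Once this inequality is in place, Lemma~\ref{SetwiseCompact} applies and yields that any sequence in $\mathcal{Q}_D(M)$ admits a subsequence $\{Q_n\}$ with $PQ_n\to PQ$ setwise for some channel $Q$; this is precisely the asserted setwise precompactness at input $P$.

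There is essentially no obstacle here: the proof is just the observation that $PQ(E_i\times\{i\})\le P(E_i)$, which trivially produces a common dominating finite measure. The only subtlety worth flagging is that the limit channel $Q$ is only guaranteed to be a (possibly randomized) element of $\mathcal{Q}(M)$ and need not lie in $\mathcal{Q}_D(M)$ itself — consistent with the statement being about precompactness rather than compactness of the set of deterministic quantizers.
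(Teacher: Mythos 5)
Your proof is correct and follows essentially the same route as the paper: both take $\nu = P\times(\text{counting measure on }\{1,\ldots,M\})$, verify $PQ(E_i\times\{i\})=P(E_i\cap B_i)\le P(E_i)$, and invoke Lemma~\ref{SetwiseCompact}. Your closing remark that the setwise limit need only be a (possibly randomized) channel in $\mathcal{Q}(M)$ is also consistent with the paper, which later exhibits a counterexample showing $\mathcal{Q}_D(M)$ is not closed under setwise convergence.
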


\begin{proof} Proof follows from  Lemma~\ref{SetwiseCompact} and the
  interpretation above regarding a quantizer as a channel. In
  particular, a majorizing finite measure $\nu$ is obtained by
  defining $\nu=P\times \lambda$, where $\lambda$ is the counting
  measure on $\{1,\ldots,M\}$ (note that
  $\nu(\mathbb{R}^n\times\{1,\ldots,M\}) =M$). Then for any measurable
  $B\subset \mathbb{R}^n$ and $i=1,\ldots,M$, we have
  $\nu(B\times\{i\})= P(B)\lambda(\{i\})=P(B)$ and so
\[
PQ(B\times \{i\}) = P(B\cap B_i) \le P(B) =  \nu   (B \times \{i\}).
\]
Since any measurable  $D \subset \mathbb{X}\times \{1,\ldots,M\}$ can
be written as the disjoint union of the sets $D_i\times \{i\}$,
$i=1,\ldots,M$, with $D_i=\{x\in
\mathcal{X}: (x,i)\in D\}$, the above implies $PQ(D) \le  \nu
(D)$.
\end{proof}

The following simple lemma provides a useful formula.

\begin{lemma}
\label{lem_quantconv}
A sequence $\{Q_n\}$ in $\mathcal{Q}(M)$  converges to a
  $Q$ in $\mathcal{Q}(M)$ setwise at input
  $P$ if and only if
\[
  \int_{A} Q_n(i|x)P(dx) \to   \int_{A} Q(i|x)P(dx) \quad
  \text{for all $A\in \mathcal{B}(\mathbb{X})$ and $i=1,\ldots,M$.}
\]
 \end{lemma}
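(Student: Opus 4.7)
The plan is to unfold the definition of setwise convergence of $PQ_n$ to $PQ$ on $\mathbb{X}\times \{1,\ldots,M\}$ and observe that, because the output alphabet is finite, every Borel subset of $\mathbb{X}\times\{1,\ldots,M\}$ decomposes canonically into a finite disjoint union of ``rectangles'' $A\times\{i\}$ with $A\in\mathcal{B}(\mathbb{X})$. The lemma then reduces to testing setwise convergence on this generating class.

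For the forward direction I would simply specialize the definition of setwise convergence to the sets $A\times\{i\}$: since
\[
  PQ_n(A\times\{i\}) = \int_A Q_n(i|x)\,P(dx), \qquad
  PQ(A\times\{i\}) = \int_A Q(i|x)\,P(dx),
\]
the assumption $PQ_n\to PQ$ setwise immediately yields the stated convergence for every $A\in\mathcal{B}(\mathbb{X})$ and each $i=1,\ldots,M$.

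For the converse I would use the decomposition already recorded in the proof of Lemma~\ref{osu}. Any Borel $D\subset\mathbb{X}\times\{1,\ldots,M\}$ is the disjoint union $D=\bigsqcup_{i=1}^M D_i\times\{i\}$ with $D_i=\{x:(x,i)\in D\}\in\mathcal{B}(\mathbb{X})$, so
\[
  PQ_n(D) = \sum_{i=1}^M \int_{D_i} Q_n(i|x)\,P(dx)
  \;\longrightarrow\; \sum_{i=1}^M \int_{D_i} Q(i|x)\,P(dx) = PQ(D),
\]
where the limit uses the hypothesis applied to each $(D_i,i)$, and the finiteness of the sum makes the termwise passage to the limit trivial.

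There is no real obstacle here: the only thing to be slightly careful about is justifying the measurability of the $D_i$ (which follows because $D_i$ is the preimage of $D$ under the Borel map $x\mapsto(x,i)$) and confirming that the product $\sigma$-algebra on $\mathbb{X}\times\{1,\ldots,M\}$ coincides with the Borel $\sigma$-algebra of $\mathbb{X}\times\{1,\ldots,M\}$ when $\{1,\ldots,M\}$ is given the discrete topology, so that no Borel set is missed by the rectangle decomposition.
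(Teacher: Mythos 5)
Your proposal is correct and follows essentially the same route as the paper: the paper's proof consists precisely of the identity $PQ(D)=\sum_{i=1}^M \int_{D_i} Q(i|x)P(dx)$ with $D_i=\{x:(x,i)\in D\}$, from which the forward direction (taking $D=A\times\{i\}$) and the converse (finite termwise limit over the disjoint decomposition) follow exactly as you argue. Your added remarks on the measurability of the sections $D_i$ and on the Borel structure of $\mathbb{X}\times\{1,\ldots,M\}$ are harmless elaborations of what the paper leaves implicit.
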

\begin{proof} The lemma follows by noticing that for any $Q\in
  \mathcal{Q}(M)$ and  measurable
  $D \subset \mathbb{X}\times \{1,\ldots,M\}$,
\[
 PQ(D)=  \int_{D} Q(dy|x)P(dx) = \sum_{i=1}^M  \int_{D_i} Q(i|x)P(dx)
\]
where $D_i=\{x\in \mathcal{X}: (x,i)\in D\}$.
\end{proof}

The following counterexample shows that the space of
quantizers $\mathcal{Q}_D(M)$ is not closed under setwise convergence:

Let $\mathbb{X}=[0,1]$ and $P$ the uniform
distribution on $[0,1]$. Recall the definition $L_{nk}=
\left[\frac{2k-2}{2n},\frac{2k-1}{2n}\right)$ in (\ref{star}) and let
  $B_{n,1} = \bigcup_{i=1}^n L_{nk}$ and $B_{n,2}= [0,1]\setminus
  B_{n,1}$. Define $\{Q_n\}$ as the sequence of $2$-cell quantizers
  given by
\[
Q_n(1|x)=1_{\{x \in B_{n,1}\}}, \quad Q_n(2|x)=1_{\{x\in  B_{n,2}\}} .
\]
Then  (\ref{Riemann}) implies that for all $A\in \mathcal{B}([0,1])$,
\[
\lim_{n \to \infty} \int_{A}   Q_n(dy|x)P(dx) = \lim_{n\to \infty}
\int_0^1 \frac{1}{2}
f_n(t) \, dt = \frac{1}{2}P(A),
\]
and thus, by Lemma~\ref{lem_quantconv}, $Q_n$ converges setwise to
$Q$ given by $Q(1|x)=Q(2|x)=\frac{1}{2}$ for all $x\in [0,1]$.
However, $Q$ is not a (deterministic) quantizer.
\qed

\begin{definition} The class of \emph{finitely randomized quantizers}
  $\mathcal{Q}_{FR}(M)$ is the convex hull of
  $\mathcal{Q}_D(M)$, i.e., $Q\in \mathcal{Q}_{FR}(M)$
  if and only if there exist $k\in \mathbb{N}$,  $Q_1,\ldots,Q_k\in
  \mathcal{Q}_D(M)$, and $\alpha_1,\ldots,\alpha_k\in [0,1]$
  with $\sum_{i=1}^k \alpha_i=1$, such that
\[
Q(i | x) =  \sum_{j=1}^{k} \alpha_j Q_j(i|x), \quad \text{for all
  $i=1,\dots,M$ and $x\in \mathbb{X}$.}
\]
\end{definition}

The next result shows that $ {\cal Q}_{R}(M)$ is the
closure of the convex hull of $ {\cal Q}_D(M)$.

\begin{theorem}\label{BasisQuantizers}
For any $Q \in {\cal Q}(M)$ there exists a sequence $\{\hat{Q}_n\}$ of finitely randomized quantizers in ${\cal Q}_{FR}(M)$ which converges to $Q$ setwise at any input $P$.
\end{theorem}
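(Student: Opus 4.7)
The plan is to first approximate $Q \in \mathcal{Q}(M)$ by a stochastic kernel $\hat{Q}_n$ that takes only finitely many values (as a map into the probability simplex), and then to exhibit each such finite-range kernel as an explicit convex combination of deterministic quantizers. Throughout, I regard $Q$ as a Borel measurable map $x \mapsto (Q(1|x),\ldots,Q(M|x))$ from $\mathbb{X}$ into the probability simplex $\Delta_M = \{p \in [0,1]^M : \sum_i p_i = 1\}$.

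\textbf{Step 1 (finite-range approximation).} For each $n$, partition the compact set $\Delta_M$ into finitely many Borel sets $\Delta^n_1,\ldots,\Delta^n_{K_n}$ of diameter at most $1/n$ (e.g., by intersecting with a fine grid of cubes in $\mathbb{R}^M$), and fix a representative $\pi^n_k \in \Delta^n_k$, which is itself a probability distribution on $\{1,\ldots,M\}$. The preimages $A^n_k = \{x \in \mathbb{X} : (Q(1|x),\ldots,Q(M|x)) \in \Delta^n_k\}$ form a Borel partition of $\mathbb{X}$. Define
\[
\hat{Q}_n(i|x) = \pi^n_k(i) \quad \text{for } x \in A^n_k, \ i=1,\ldots,M.
\]
By construction $\hat{Q}_n \in \mathcal{Q}(M)$, and $\sup_{x,i} |\hat{Q}_n(i|x) - Q(i|x)| \le 1/n$.

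\textbf{Step 2 (setwise convergence).} For any $P$ and any $A \in \mathcal{B}(\mathbb{X})$, the uniform bound together with dominated (in fact bounded) convergence yields
\[
\int_A \hat{Q}_n(i|x) P(dx) \to \int_A Q(i|x) P(dx), \quad i=1,\ldots,M.
\]
By Lemma~\ref{lem_quantconv}, $\hat{Q}_n \to Q$ setwise at input $P$, for every $P$.

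\textbf{Step 3 ($\hat{Q}_n$ is finitely randomized).} For each $\mathbf{i} = (i_1,\ldots,i_{K_n}) \in \{1,\ldots,M\}^{K_n}$, define a deterministic $M$-cell quantizer $q_{\mathbf{i}} \in \mathcal{Q}_D(M)$ by setting $q_{\mathbf{i}}(x) = i_k$ for $x \in A^n_k$; its cells $B_i = \bigcup_{k: i_k = i} A^n_k$ are Borel (possibly empty). Assign the weight $\alpha_{\mathbf{i}} = \prod_{k=1}^{K_n} \pi^n_k(i_k)$. Since each $\pi^n_k$ is a probability vector, $\sum_{\mathbf{i}} \alpha_{\mathbf{i}} = 1$. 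For $x \in A^n_{k_0}$ and $i \in \{1,\ldots,M\}$,
\[
\sum_{\mathbf{i}} \alpha_{\mathbf{i}}\, q_{\mathbf{i}}(i|x)
= \sum_{\mathbf{i}: i_{k_0} = i} \prod_{k=1}^{K_n} \pi^n_k(i_k)
= \pi^n_{k_0}(i) \prod_{k \neq k_0} \Bigl(\sum_{i_k} \pi^n_k(i_k)\Bigr)
= \pi^n_{k_0}(i) = \hat{Q}_n(i|x),
\]
so $\hat{Q}_n = \sum_{\mathbf{i}} \alpha_{\mathbf{i}} q_{\mathbf{i}} \in \mathcal{Q}_{FR}(M)$.

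\textbf{Main obstacle.} The only nontrivial point is Step~3: giving a concrete factorization of a kernel with finite image in $\Delta_M$ as a convex combination of deterministic quantizers. The trick is to treat the values $\pi^n_k$ on the different cells $A^n_k$ as independent randomizations, so that the tensor product $\prod_k \pi^n_k(i_k)$ provides the convex coefficients over the $M^{K_n}$ deterministic quantizers indexed by $\mathbf{i}$; the product structure automatically telescopes in the verification above. Step~1 is essentially a measurability book-keeping step, and Step~2 is immediate from uniform approximation.
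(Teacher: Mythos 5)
Your proof is correct. The overall skeleton matches the paper's: first approximate $Q$ uniformly in $x$ by a kernel taking finitely many values in the simplex, then write that finite-range kernel as a convex combination of deterministic quantizers, and conclude setwise convergence at every $P$ from the uniform bound. The interesting difference is in the decomposition step. The paper views the tuple of values $(z^{(1)},\ldots,z^{(L(n))})\in(\mathcal{P}_M)^{L(n)}$ and invokes the Krein--Milman theorem to approximate it (within $1/n$) by a convex combination of tuples of extreme points, each of which yields a deterministic quantizer; this gives an overall $2/n$ uniform error. You instead give an explicit and \emph{exact} factorization: taking independent randomizations across the cells $A^n_k$, with weights $\alpha_{\mathbf{i}}=\prod_k \pi^n_k(i_k)$ over the $M^{K_n}$ deterministic quantizers $q_{\mathbf{i}}$, and the product structure makes the marginalization telescope to $\pi^n_{k_0}(i)$. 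Your argument is more elementary (no appeal to Krein--Milman) and proves the slightly stronger intermediate fact that every finite-range element of $\mathcal{Q}(M)$ lies \emph{exactly} in $\mathcal{Q}_{FR}(M)$, at the cost of a combinatorially large (but still finite, which is all that is needed) number of components; the paper's route keeps the number of components small and emphasizes the convex-geometric picture of $\mathcal{Q}_{FR}(M)$ as (approximately) the convex hull of the extreme points $\mathcal{Q}_D(M)$.
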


\begin{proof} We will prove the existence of  a sequence
$\{ \hat{Q}_n \}$ in $ {\cal Q}_{FR}(M)$  such that $\
  \hat{Q}_n(\,\cdot\,|x)\to Q(\,\cdot\,|x)$ setwise  for all $x\in
  \mathbb{X}$.

Let ${\cal P}_M=\{z\in \mathbb{R}^M:\, z_1+\cdots + z_M=1,\, z_i\ge 0,\,
i=1,\ldots,M\}$ denote the probability simplex in $\mathbb{R}^M$ and note that
each $Q\in {\cal Q}({\cal M})$ is uniquely represented by the function $Q^v:\mathbb{X} \to
{\cal P}_M$ defined by
\[
  Q^v(x) = (Q(1|x),Q(2|x),\ldots,Q(M|x)).
\]
For a positive integer $n$ let ${\cal P}_{M,n}$ be the collection of
probability vectors in ${\cal P}_M$ with rational components having  common
denominator $n$, i.e.,
\[
{\cal P}_{M,n}= \bigl\{z\in {\cal P}_M: z_i
\in\{0,1/n,\ldots,(n-1)/n,1\},\, i=1,\ldots,M\bigr\}.
\]
Clearly,  any $z\in {\cal P}_M$ can be approximated
within error  $1/n$ in the $l_{\infty}$ sense by a member of ${\cal P}_{M,n}$,
i.e.,
\[
    \max_{z\in {\cal P}_M} \min_{z'\in {\cal P}_{M,n}} \|z-z'\|_{\infty}=    \max_{z\in {\cal P}_M} \min_{z'\in {\cal P}_{M,n}}
    \, \max_{i=1,\ldots,M}|z_i-z_i'| \le \frac{1}{n}.
\]
Breaking ties in a predetermined manner, we can make the selection of
$z'$ for a given $z$ unique, and thus define a Borel measurable
mapping $q_n:{\cal P}_M\to {\cal P}_{M,n}$ such that $z'=q_n(z)$
approximates $z$ in the above sense. Given $Q\in {\cal Q}({\cal M})$,
use this mapping to define $Q_n\in {\cal Q}({\cal M})$ through the
relation
\[
  Q_n^v(x) = q_n(Q^v(x)).
\]
(The measurability of $Q(i|x)$ in $x$ follows from the measurability
of the mapping $q_n$.)
Let $\{z^{(1)},\ldots,z^{(L(n))}\}$ be an enumeration of those  elements
of ${\cal P}_{M,n}$ for which the sets
\[
S_j= \{x: Q_n^v(x) = z^{(j)}\}, \quad j=1,\ldots,L(n)
\]
are not empty (clearly, $L(n)\le (n+1)^M$).  Note that the $S_i$  form
a Borel-measurable partition of $\mathbb{X}$ and we have
\[
  u:= (z^{(1)},z^{(2)},\ldots,z^{L(n)}) \in \bigl( {\cal P}_M
  \bigr)^{L(n)}
\]
and
\[
    Q_n^v(x)= z^{(j)} \quad \text{if $x\in S_j$}.
\]
Viewed as a subset of $\mathbb{R}^{M \cdot L(n)}$, the set $\bigl( {\cal P}_M
\bigr)^{L(n)}$ is compact and  convex and therefore by  the Krein-Milman
theorem (see, e.g., \cite{Bar02}) it is the closure of the convex hull
of its extreme points. The set of extreme points of $\bigl( {\cal P}_M
\bigr)^{L(n)}$ is $\bigl( {\cal E}_M \bigr)^{L(n)}$, where
${\cal E}_M=\{e_1,\ldots,e_M\}$ is the standard basis for $\mathbb{R}^M$.  In
particular, we can find
$u_1,\ldots,u_N \in \bigl( {\cal E}_M \bigr)^{L(n)}$ and
$(\alpha_1,\ldots,\alpha_N)\in {\cal P}_N$ such that $ \bigl\|u-\sum_{k=1}^N
\alpha_k u_k\bigr\|\le \frac{1}{n}$  ($\|\cdot\|$ denotes the standard
Euclidean norm in any dimension). Since
$u_k=(u_{k,1},\ldots,u_{k,L(n)})$, where $u_{k,j}\in {\cal E}_M$ for all $k$
and $j$, we can define the deterministic quantizers $Q_{n,k}\in {\cal Q}_D({\cal M})$,
$k=1,\ldots,N$,  by setting
\[
Q_{n,k}^v(x) = u_{k,j}  \quad \text{if $x\in S_j$}.
\]

Putting things together, we obtain that
\begin{equation}
\label{approx1}
       \biggl\|Q_n^v(x) -\sum_{k=1}^N \alpha_k Q_{n,k}^v(x)
      \biggr\| \le \frac{1}{n} \quad \text{for all $x\in \mathbb{X}$.}
\end{equation}
Define $\hat{Q}_n\in {\cal Q}(M)$ by
\[
\hat{Q}_n(i|x)= \sum_{k=1}^N \alpha_k Q_{n,k}(i|x).
\]
Combining (\ref{approx1}) with $\|Q^v(x) - Q_n^v(x)\|_{\infty} \le
\frac{1}{n} $, we obtain
\[
|Q(i|x)-\hat{Q}_n(i|x)|\le \frac{2}{n}\quad \text{for all $x\in \mathbb{X}$
  and $i=1,\ldots,M$}
\]
which implies that $\hat{Q}_n(\,\cdot\,|x)\to Q(\,\cdot\,|x)$ setwise for all
$x\in \mathbb{X}$.
Since each  $\hat{Q}_n$ is a convex combination of deterministic quantizers
in ${\cal Q}_D(M)$, the proof is complete.
\end{proof}

The preceding theorem has important consequences in that it tells us
that the space of deterministic quantizers is a ``basis'' for the
space of communication channels between $\mathbb{X}$ and $\{1,\ldots,M\}$
in an appropriate sense. In the following we  show that  an
optimal channel can be replaced with an optimal quantizer without any
loss in performance.

\begin{proposition}
For any $Q\in {\cal Q}(M)$ there is a $Q'\in
\mathcal{Q}_D(M)$ with $J(P,Q')\le J(P,Q)$. If there exists an
optimal channel in ${\cal Q}(M)$ for problem \textsc{P2},
then there is a quantizer in ${\cal Q}_D(M)$ that is optimal.
\end{proposition}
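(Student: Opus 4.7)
The plan is to use a nearest-neighbor derandomization. Since the output space of any channel in $\mathcal{Q}(M)$ is the finite set $\{1,\ldots,M\}$, a control policy is just a tuple $\gamma=(u_1,\ldots,u_M)\in\mathbb{U}^M$, and
\[
J(P,Q)=\inf_{(u_1,\ldots,u_M)\in\mathbb{U}^M}\sum_{i=1}^M\int c(x,u_i)\,Q(i|x)P(dx).
\]
For any fixed $\gamma$ I would associate the deterministic quantizer $Q_\gamma\in\mathcal{Q}_D(M)$ with cells $B_i=\{x:i\text{ is the smallest index attaining }\min_j c(x,u_j)\}$. The lexicographic tie-breaking together with the joint measurability of $c$ (implied by A3) makes the $B_i$ Borel. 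The key pointwise inequality
\[
\min_i c(x,u_i)=\sum_{i=1}^M c(x,u_i)Q_\gamma(i|x)\le\sum_{i=1}^M c(x,u_i)Q(i|x),\qquad x\in\mathbb{X},
\]
holds because $Q(\cdot|x)$ is a probability distribution on $\{1,\ldots,M\}$. Integrating against $P$ and using $\gamma$ itself as a feasible control policy for $Q_\gamma$ gives, for every $\gamma$,
\[
J(P,Q_\gamma)\le\int\min_i c(x,u_i)\,P(dx)\le\sum_{i=1}^M\int c(x,u_i)\,Q(i|x)P(dx).
\]

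To produce a single $Q'$, I would apply Theorem \ref{InfimumMinimum} (under assumptions A3 and A4) to obtain an optimal policy $\gamma^*=(u_1^*,\ldots,u_M^*)\in\mathbb{U}^M$ for the given $Q$, i.e., one that attains the infimum defining $J(P,Q)$. Setting $Q':=Q_{\gamma^*}$, the preceding display evaluated at $\gamma^*$ reduces to
\[
J(P,Q')\le\sum_{i=1}^M\int c(x,u_i^*)\,Q(i|x)P(dx)=J(P,Q),
\]
which is the desired inequality. For the second statement, if $Q^*\in\mathcal{Q}(M)$ is an optimal (minimizing) channel for Problem \textsc{P2}, applying the first part to $Q^*$ yields $Q'\in\mathcal{Q}_D(M)\subset\mathcal{Q}(M)$ with $J(P,Q')\le J(P,Q^*)$; minimality of $Q^*$ forces equality, so $Q'$ is also an optimal channel and is a deterministic quantizer.

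The only real technicality is the measurable tie-breaking required to define the cells $\{B_i\}$ so that $Q_\gamma$ is a genuine element of $\mathcal{Q}_D(M)$; this is a standard measurable-selection step that relies on the joint measurability of $c$ (continuous in $u$ by A3, measurable in $x$), and it constitutes the main obstacle in making the otherwise elementary nearest-neighbor argument rigorous.
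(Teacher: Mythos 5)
Your proof is correct and follows essentially the same route as the paper: both derandomize via the nearest-neighbor cells $B_i=\{x:\,c(x,u_i)\le c(x,u_j)\ \forall j\}$ with a measurable (lexicographic) tie-break, and both use the pointwise fact that the minimum over $i$ is dominated by the $Q(\cdot|x)$-average. The only difference is that you explicitly invoke Theorem~\ref{InfimumMinimum} (under \textsc{A3}, \textsc{A4}) to get an exact optimal $\gamma^*$ and hence a single $Q'$, a step the paper leaves implicit by working with an arbitrary finite-cost policy; this is a minor but legitimate tightening.
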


\begin{proof} Only the first statement needs to be proved.
We follow an argument common in the source coding literature (see,
e.g., the Appendix of \cite{Witsenhausen79}).

For a   policy $\gamma: \{1,\ldots,M\}\to
\mathbb{U}=\mathbb{X}$ (with finite cost) define for all $i$,
\[
B_i = \big\{ x:\,   c(x,\gamma(i)) \leq c(x,\gamma(j)) , \quad j=1,\ldots,M \big\}.
\]
Letting $B_1=\bar{B}_1$ and $B_i=\bar{B}_i\setminus
\bigcup_{j=1}^{i-1}B_j$, $i=2,\ldots,M$, we obtain a partition
$\{B_i,\ldots,B_M\}$ and a corresponding quantizer
$Q' \in {\cal Q}_D(M)$. It is easy to see that
$  E^{Q',\gamma}_{P} [c(X,U)]\le E^{Q,\gamma}_{P} [c(X,U)]$ for any
$Q\in {\cal Q}(M)$.
\end{proof}

The following shows that setwise convergence of quantizers implies
convergence under total variation.

\begin{theorem}\label{CompactTVandSetwise}
Let $\{Q_n\}$  be a sequence of quantizers  in ${\cal Q}_D(M)$ which converges to a quantizer $Q\in {\cal Q}_D(M)$
setwise at $P$. Then, the convergence is also under total variation at
$P$.
\end{theorem}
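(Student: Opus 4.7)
The plan is to exploit the fact that each quantizer is specified by a measurable partition, so setwise convergence at $P$ forces the partition cells of $Q_n$ to converge to those of $Q$ in $P$-measure, and that control on a finite number of cells is strong enough to upgrade setwise convergence to total variation convergence.

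First I would set notation: write $Q(i|x) = 1_{\{x \in B_i\}}$ and $Q_n(i|x) = 1_{\{x \in B_{n,i}\}}$ for the respective partitions $\{B_1,\dots,B_M\}$ and $\{B_{n,1},\dots,B_{n,M}\}$, so that for any $A \in \mathcal{B}(\mathbb{X})$ and $i$,
\[
PQ(A \times \{i\}) = P(A \cap B_i), \qquad PQ_n(A \times \{i\}) = P(A \cap B_{n,i}).
\]
By Lemma~\ref{lem_quantconv}, setwise convergence of $Q_n$ to $Q$ at $P$ is exactly the statement that $P(A \cap B_{n,i}) \to P(A \cap B_i)$ for every Borel $A$ and every $i \in \{1,\dots,M\}$.

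The key step is to show $P(B_{n,i} \triangle B_i) \to 0$ for each fixed $i$. Applying the setwise convergence with the test set $A = B_i$ gives $P(B_i \cap B_{n,i}) \to P(B_i)$, so $P(B_i \setminus B_{n,i}) \to 0$; applying it with $A = \mathbb{X} \setminus B_i$ gives $P((\mathbb{X} \setminus B_i) \cap B_{n,i}) \to P((\mathbb{X} \setminus B_i) \cap B_i) = 0$, i.e., $P(B_{n,i} \setminus B_i) \to 0$. Summing these gives $P(B_{n,i} \triangle B_i) \to 0$.

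Finally, every $D \in \mathcal{B}(\mathbb{X} \times \{1,\dots,M\})$ decomposes as $D = \bigsqcup_{i=1}^M (D_i \times \{i\})$ with $D_i = \{x: (x,i) \in D\}$, and therefore
\[
|PQ_n(D) - PQ(D)| \le \sum_{i=1}^M |P(D_i \cap B_{n,i}) - P(D_i \cap B_i)| \le \sum_{i=1}^M P(B_{n,i} \triangle B_i).
\]
Taking the supremum over $D$ and using the definition~\eqref{TValternative} yields
\[
\|PQ_n - PQ\|_{TV} \le 2 \sum_{i=1}^M P(B_{n,i} \triangle B_i) \longrightarrow 0,
\]
which is the desired total variation convergence at input $P$. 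The only real content is the symmetric-difference bound; the finiteness of the output alphabet is what allows the single ``testing set'' estimate above to uniformize over all Borel subsets of $\mathbb{X} \times \{1,\dots,M\}$, and this is precisely the step that would fail for channels with a continuum output space.
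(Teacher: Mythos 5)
Your proof is correct and follows essentially the same route as the paper's: both reduce the claim to showing $P(B_{n,i}\bigtriangleup B_i)\to 0$ for each cell by testing setwise convergence against the cells of the limit quantizer, and then bound $\|PQ_n-PQ\|_{TV}$ by a constant times $\sum_{i=1}^M P(B_{n,i}\bigtriangleup B_i)$. The only differences are cosmetic: you obtain the symmetric-difference convergence by testing with $B_i$ and its complement rather than with all pairs of cells, and you use the set-supremum form of total variation (with the factor $2$) instead of the functional form in \eqref{TValternative}.
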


\begin{proof} Let $B^n_1,\ldots,B^n_M$ be the cells of $Q_n$.  Since
$Q_n \to Q$ setwise at input $P$, we have $PQ_n(B \times \{i\})\to
PQ(B \times \{i\})$ for any  $B \in {\cal B}(\mathbb{X})$.  Since
$PQ_n(B \times \{i\}) = \int_B 1_{\{x \in B^n_i \}} P(dx)$, we obtain
\[
P(B \cap B^n_i)  \to P(B \cap B_i), \quad \text{for all $i=1,\ldots,M$}.
\]
If  $B_1,\ldots,B_M$ are  the cells of $Q$, the above implies
$P(B_j \cap B^n_i)  \to P(B_j \cap B_i)$ for all $i,j\in \{1,\ldots,M\}$.
Since both $\{B^n_i\}$ and $\{B_n\}$ are partitions of $\mathbb{X}$,
we obtain
\[
  P(B^n_i\bigtriangleup B_i) \to 0 \quad \text{for all $i=1,\ldots,M$},
\]
where $B^n_i\bigtriangleup B=(B^n_i\setminus B)\cup (B\setminus
B^n_i)$. Then we have
\begin{eqnarray}
\lefteqn{\|PQ_n - PQ\|_{TV} } \nonumber \\
 &=&\sup_{f: \|f\|_{\infty} \leq 1}  \left|  \sum_{i=1}^M
\left( \int_{\mathbb{X}} f(x,i) Q_n(i|x)P(dx) -  \int_{\mathbb{X}}
f(x,i) Q(i|x)P(dx)\right)\right|  \nonumber \\
 &=&\sup_{f: \|f\|_{\infty} \leq 1}   \left| \sum_{i=1}^M
 \int_{\mathbb{X}} f(x,i)\bigl( 1_{\{x\in B^n_i\}}
-1_{\{x\in B^n_i\}}\bigr) P(dx)\right|  \nonumber \\
&\le &  \sup_{f: \|f\|_{\infty} \leq 1}  \sum_{i=1}^M
\int_{B^n_i \bigtriangleup B_i}  |f(x,i)| P(dx)
 \nonumber \\
&\le &  \sum_{i=1}^M P(B^n_i \bigtriangleup B_i ) \to
 0 \label{diff_conv}
\end{eqnarray}
and convergence in total variation follows.
\end{proof}

We next consider quantizers with convex codecells and an input
distribution that is absolutely continuous with respect to the
Lebesgue measure on $\mathbb{R}^n$ \cite{ConvexCodecell}.  Assume
$Q\in \mathcal{Q}_D(M)$ with cells $B_1,\ldots,B_M$, each of
which is a convex subset of $\mathbb{R}^n$. By the separating
hyperplane theorem,  there exist pairs
of complementary closed half spaces $\{(H_{i,j},H_{j,i}):\, 1\le
i,j\le M, i\neq j\}$ such that for all $i=1,\ldots,M$,
\[
  B_i \subset \bigcap_{j\ne i} H_{i,j}.
\]
Each $\bar{B}_i:= \bigcap_{j\ne i} H_{i,j}$ is a closed convex
polytope  and by the absolute continuity of $P$ one  has
$P(\bar{B}_i\setminus B_i)=0$ for all $i=1,\ldots,M$. One can thus
obtain a ($P$--a.s) representation of $Q$ by the $M(M-1)/2$
hyperplanes $h_{i,j}=H_{i,j}\cap H_{j,i}$.

Let $\mathcal{Q}_C(M)$ denote the collection of $M$-cell
quantizers with convex cells and consider a sequence $\{Q_n\}$ in
$\mathcal{Q}_C(M)$. It can be shown (see the proof of
Thm.~1 in \cite{ConvexCodecell}) that using an appropriate
parametrization of the separating hyperplanes, a subsequence $Q_{n_k}$
can be can be chosen which converges to a $Q \in
\mathcal{Q}_C(M)$ in the sense that $P(B^{n_k}_i
\bigtriangleup B_i )\to 0$ for all $i=1,\ldots,M$, where the
$B^{n_k}_i$ and the $B_i$ are the cells of $Q_{n_k}$ and $Q$,
respectively. In view of (\ref{diff_conv}), we obtain the following.

\begin{theorem}\label{compactConvexBins}
The set ${\cal Q}_C(M)$ is compact under total variation at
any input measure $P$ that is absolutely continuous with respect to
the Lebesgue measure on $\mathbb{R}^n$.
\end{theorem}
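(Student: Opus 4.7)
The plan is to establish sequential compactness of $\mathcal{Q}_C(M)$ under total variation at input $P$, which is equivalent to compactness since the total variation topology is metrizable. So I would take an arbitrary sequence $\{Q_n\}\subset \mathcal{Q}_C(M)$ with cells $B^n_1,\ldots,B^n_M$ and aim to extract a subsequence converging in total variation to some limit $Q\in \mathcal{Q}_C(M)$.

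The first step is to pass to a subsequence $\{Q_{n_k}\}$ that converges to some $Q \in \mathcal{Q}_C(M)$ in the sense of $P$-symmetric difference of cells. This is exactly what the discussion preceding the theorem sets up: each $Q_n\in \mathcal{Q}_C(M)$ has a ($P$-a.s.)\ representation via the $M(M-1)/2$ separating hyperplanes $h^n_{i,j}=H^n_{i,j}\cap H^n_{j,i}$. Using a suitable (compactifying) parametrization of these hyperplanes, the argument from the proof of Thm.~1 in \cite{ConvexCodecell} produces a subsequence whose hyperplane parameters converge, yielding a limiting $Q\in \mathcal{Q}_C(M)$ with cells $B_1,\ldots,B_M$ such that
\[
   P(B^{n_k}_i \bigtriangleup B_i) \to 0 \quad \text{for all } i=1,\ldots,M.
\]
The absolute continuity of $P$ with respect to Lebesgue measure is what makes this work: degenerate limits of hyperplanes (e.g., escaping to infinity, or becoming coincident) lead to cells that are $P$-null, which is permissible because our definition of $\mathcal{Q}_D(M)$ (and hence of $\mathcal{Q}_C(M)$) allows empty cells.

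The second step is to upgrade $P$-symmetric-difference convergence of cells to total variation convergence of $PQ_{n_k}$ to $PQ$. This is already done in the proof of Theorem~\ref{CompactTVandSetwise}: the chain of inequalities leading to \eqref{diff_conv} applies verbatim to any two quantizers in $\mathcal{Q}_D(M)$ and gives
\[
  \|PQ_{n_k}-PQ\|_{TV} \le \sum_{i=1}^M P(B^{n_k}_i \bigtriangleup B_i) \to 0.
\]
Combining the two steps establishes that every sequence in $\mathcal{Q}_C(M)$ has a total variation convergent subsequence with limit in $\mathcal{Q}_C(M)$, proving the claim.

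The main obstacle is really outsourced to the cited result \cite{ConvexCodecell}: one must handle the non-compactness of the hyperplane parameter space and verify that the limiting cells are again convex with the correct partition structure. The clever ingredient there is to compactify the hyperplane parameters and then use the absolute continuity of $P$ to discard the ``cells at infinity'' so that the limit remains a genuine element of $\mathcal{Q}_C(M)$. Once this extraction is invoked as a black box, the total variation estimate from Theorem~\ref{CompactTVandSetwise} closes the proof with essentially no additional work.
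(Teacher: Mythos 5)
Your proposal is correct and follows essentially the same route as the paper: the paper's argument is precisely the extraction of a subsequence with $P(B^{n_k}_i \bigtriangleup B_i)\to 0$ via the hyperplane parametrization from the proof of Theorem~1 in \cite{ConvexCodecell}, followed by the bound (\ref{diff_conv}) from the proof of Theorem~\ref{CompactTVandSetwise} to upgrade this to total variation convergence. Your added remarks on metrizability (so sequential compactness suffices) and on degenerate hyperplane limits yielding $P$-null cells are consistent elaborations of the same argument.
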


We can  now state an existence result for optimal quantization (problem
$\textsc{P1}$).

\begin{theorem}
\label{thmex}
Let $P$ be absolutely continuous and suppose the goal is to find the
best quantizer $Q$ with $M$ cells minimizing
$J(P,Q)=\inf_{\gamma}E_P^{Q,\gamma}(X,U)$ under assumption \textsc{A2},
where $Q$ is restricted to ${\cal Q}_C(M)$. Then an optimal
quantizer exists.
\end{theorem}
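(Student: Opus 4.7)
The plan is to assemble this as a direct corollary of the machinery already built in the paper, with essentially no new work beyond checking that the hypotheses line up. First I would recall from Theorem \ref{compactConvexBins} that, since $P$ is absolutely continuous with respect to Lebesgue measure on $\mathbb{R}^n$, the class $\mathcal{Q}_C(M)$ of $M$-cell quantizers with convex cells is compact under total variation at input $P$. Next, under assumption \textsc{A2}, Theorem \ref{thm13} gives that the functional $Q\mapsto J(P,Q)$ is continuous on $\mathcal{Q}$ in the total variation topology, and in particular its restriction to $\mathcal{Q}_C(M)$ is continuous.

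Given these two ingredients, I would invoke Theorem \ref{ExistenceTheorem}\,(\ref{tvcase}): a continuous functional on a compact set attains its minimum (and maximum). Applying this to $J(P,\cdot)$ on $\mathcal{Q}_C(M)$ yields the existence of $Q^*\in \mathcal{Q}_C(M)$ with
\[
  J(P,Q^*) = \inf_{Q\in \mathcal{Q}_C(M)} J(P,Q),
\]
which is the desired optimal quantizer.

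There is really no main obstacle here, since the heavy lifting was done in Theorem \ref{compactConvexBins} (compactness via the hyperplane parametrization of convex-cell quantizers together with Scheff\'e-type arguments reducing setwise convergence to total variation for deterministic quantizers) and in Theorem \ref{thm13} (continuity of $J(P,\cdot)$ in total variation). The only small point worth making explicit in the write-up is that $\mathcal{Q}_C(M)\subset \mathcal{Q}_D(M)\subset \mathcal{Q}$, so that the continuity statement of Theorem \ref{thm13} applies on this subclass, and that the cost value $\inf_\gamma E_P^{Q,\gamma}[c(X,U)]$ used here is exactly the $J(P,Q)$ appearing in the general existence theorem. Hence the proof reduces to a one-line citation of Theorems \ref{thm13}, \ref{ExistenceTheorem}\,(\ref{tvcase}), and \ref{compactConvexBins}.
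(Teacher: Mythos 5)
Your proposal is correct and is essentially identical to the paper's own proof, which simply cites Theorems \ref{ExistenceTheorem} and \ref{compactConvexBins} (with Theorem \ref{thm13} implicitly supplying the continuity used inside Theorem \ref{ExistenceTheorem}\,(\ref{tvcase})). Your extra remarks about the inclusions $\mathcal{Q}_C(M)\subset \mathcal{Q}_D(M)\subset \mathcal{Q}$ are a harmless and accurate elaboration of the same argument.
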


\begin{proof}
Existence follows from Theorems \ref{ExistenceTheorem} and
\ref{compactConvexBins}.
\end{proof}

In the quantization literature finding an optimal quantizer means
finding optimal codecells and corresponding reconstruction points. Our
formulation does not require the existence of optimal reconstruction
points (i.e., optimal policy $\gamma$).  For cost
functions of the form $c(x,u)=\|x-u\|^p$ for $x, u \in \mathbb{R}^n$
and some $p>0$, the cells of ``good'' quantizers will be convex by
Lloyd-Max conditions of optimality; see \cite{ConvexCodecell} for
further results on convexity of bins for entropy constrained
quantization problems. We note that \cite{AbayaWise} also considered
such cost functions for existence results on optimal quantizers; Graf
and Luschgy \cite{GrLu00} considered more general norm-based cost
functions.

\section{Multi-stage case}
\label{secmulti}
We consider the general case $T \in \mathbb{N}$.  It
should be observed that the effects of a control policy applied any
given time-stage presents itself in two ways, in both the cost
occurred at the given time-stage  and  the effect on the
process distribution at future time-stages, which is known as the
dual effect of control \cite{DualEffect}

The next theorem shows the continuity of the optimal cost in the
observation channel under some regularity conditions. Note that the
existence of best and worst channels follows under an appropriate
compactness condition as in Theorem~\ref{ExistenceTheorem} (iii). We
need the following definition.

\begin{definition} \label{ch_conv_unif}
A sequence of channels $\{Q_n\}$ converges to a channel $Q$
 \emph{uniformly} in total variation if
\[
\lim_{n\to \infty} \sup_{x\in \mathbb{X}}\, \bigl\|Q_n(\,\cdot\,|x) -
Q(\,\cdot\,|x) \bigr\|_{TV} =0.
\]
\end{definition}

Note that in the special but important case of additive observation
channels, uniform convergence in total variation is equivalent to the
weaker condition that $Q_n(\, \cdot\,|x) \to Q(\, \cdot\,|x)$ in total
variation for each $x$. When the additive noise is absolutely
continuous with respect to the Lebesgue measure, uniform convergence
in total variation is equivalent to requiring that the noise density
corresponding to $Q_n$ converges in the $L_1$ sense to the density
corresponding to $Q$. For example, if the noise density is
estimated from $n$ independent observations using any of the
$L_1$ consistent density estimates described in
e.g.\ \cite{DevroyeGyorfi}, then the resulting $Q_n$ will converge
(with probability one) uniformly in total variation.

\begin{theorem}\label{thm7}
Consider the cost function (\ref{Cost}) with arbitrary $T \in
\mathbb{N}$. Suppose assumption \textsc{A2} holds. Then, the
optimization problem \textsc{P1} is continuous in the observation
channel in the sense that if $\{Q_n\}$ is a sequence of channels
converging to $Q$ uniformly in total variation, then
\[
\lim_{n\to \infty} J(P,Q_n)= J(P,Q).
\]
\end{theorem}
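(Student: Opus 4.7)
The plan is to reduce the multi-stage result to a uniform-in-policy perturbation bound on the induced joint distributions, and then apply the same $\epsilon$-optimal policy trick used in the proof of Theorem~\ref{thm13}. Write $\delta_n = \sup_{x\in\mathbb{X}}\|Q_n(\,\cdot\,|x) - Q(\,\cdot\,|x)\|_{TV}$, which tends to $0$ by the assumption of uniform total variation convergence. For a fixed admissible policy $\Pi = \{\gamma_t\}$, let $\mu^{Q,\Pi}$ and $\mu^{Q_n,\Pi}$ denote the joint laws of $(X_{[0,T-1]},Y_{[0,T-1]},U_{[0,T-1]})$ on the appropriate product Borel space, induced by the initial law $P$, the controlled transition kernel $P(\,\cdot\,|x,u)$, the policy $\Pi$, and the observation channel $Q$ or $Q_n$, respectively.

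The first main step is to prove the uniform bound
\[
\sup_{\Pi \in \mathscr{P}} \bigl\|\mu^{Q,\Pi} - \mu^{Q_n,\Pi}\bigr\|_{TV} \le T\,\delta_n.
\]
I would establish this by induction on $t$, using the fact that both measures admit the same factorization except for the observation channel step. The key building block is the elementary estimate
\[
\|\mu\otimes K - \mu'\otimes K'\|_{TV} \le \|\mu-\mu'\|_{TV} + \sup_x \|K(\,\cdot\,|x)-K'(\,\cdot\,|x)\|_{TV},
\]
which follows from the dual characterization of total variation in (\ref{TValternative}) together with the triangle inequality. At each time $t$, the one-step extension from $(X_{[0,t-1]},Y_{[0,t-1]},U_{[0,t-1]})$ to $(X_{[0,t]},Y_{[0,t]},U_{[0,t]})$ is the successive composition of (i) the state transition $P(\,\cdot\,|x_{t-1},u_{t-1})$, (ii) the observation channel ($Q$ or $Q_n$), and (iii) the Dirac kernel $\delta_{\gamma_t(I_t)}$; only step (ii) differs between the two laws and it contributes at most $\delta_n$ to the TV distance, while (i) and (iii) are common to both and so cannot increase the TV gap. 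The crucial point is that because $\gamma_t$ is a deterministic measurable function of the history, step (iii) contributes zero uniformly in $\Pi$, so iterating $T$ times gives the claimed bound with a constant independent of the choice of policy.

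Since the cost functional $(x_{[0,T-1]},u_{[0,T-1]}) \mapsto \sum_{t=0}^{T-1} c(x_t,u_t)$ is a measurable function of the history with sup-norm at most $T\|c\|_\infty$ under assumption \textsc{A2}, the dual formulation (\ref{TValternative}) yields
\[
\bigl|J(P,Q,\Pi) - J(P,Q_n,\Pi)\bigr| \le T\|c\|_\infty \bigl\|\mu^{Q,\Pi}-\mu^{Q_n,\Pi}\bigr\|_{TV} \le T^2 \|c\|_\infty \,\delta_n,
\]
where the bound holds uniformly over $\Pi \in \mathscr{P}$.

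The final step is the standard $\epsilon$-optimal policy argument from the proof of Theorem~\ref{thm13}. Given $\epsilon>0$, pick $\Pi^*$ that is $\epsilon$-optimal for $Q$ and $\Pi_n$ that is $\epsilon$-optimal for $Q_n$. Then $J(P,Q_n) \le J(P,Q_n,\Pi^*) \le J(P,Q,\Pi^*) + T^2\|c\|_\infty\delta_n \le J(P,Q) + \epsilon + T^2\|c\|_\infty\delta_n$ and, symmetrically, $J(P,Q) \le J(P,Q_n) + \epsilon + T^2\|c\|_\infty\delta_n$. Letting $n \to \infty$ and then $\epsilon \to 0$ gives the desired convergence. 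The main obstacle is step one, the uniform-in-policy telescoping bound: once that is in hand, the rest is routine. This is also exactly the place where \emph{uniform} total variation convergence is needed; mere pointwise TV convergence $Q_n(\,\cdot\,|x)\to Q(\,\cdot\,|x)$ would allow the integration against the (possibly singular) state distributions at intermediate times to blow up, since these distributions depend on $Q$ and $\Pi$ and cannot be controlled a priori.
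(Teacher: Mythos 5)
Your proof is correct, but it follows a genuinely different (and somewhat more streamlined) route than the paper's. The paper also begins with the $\epsilon$-optimal policy bound, i.e., the analogue (\ref{s34}) of (\ref{s33}), but it then compares $J(P,Q_n,\Pi^n)$ and $J(P,Q,\Pi^n)$ separately for each cost term $c(X_t,U_t)$ through a hybrid-system telescoping argument: it introduces recursively defined cost-to-go functions $\zeta^n_{k,t}$ and the laws $\mu^n_k$, $\nu^n_k$ of the segment $(X_{[0,k]},Y_{[0,k]})$ for systems that use $Q$ up to stage $k-1$ and $Q_n$ thereafter, shows the telescoping identity linking consecutive hybrids, and bounds each increment by $\|c\|_\infty\,\|\mu^n_k-\nu^n_k\|_{TV}\le \|c\|_\infty\,\delta_n$ because the two hybrid laws differ only in the channel used at the last stage. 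You instead prove a single policy-uniform inequality at the level of path measures, $\sup_{\Pi}\|\mu^{Q,\Pi}-\mu^{Q_n,\Pi}\|_{TV}\le T\delta_n$, by iterating the kernel-composition estimate $\|\mu\otimes K-\mu'\otimes K'\|_{TV}\le \|\mu-\mu'\|_{TV}+\sup_x\|K(\,\cdot\,|x)-K'(\,\cdot\,|x)\|_{TV}$ over the common factorization (state kernel, observation channel, Dirac control kernel), and then apply the dual characterization (\ref{TValternative}) once to the total cost, which is bounded by $T\|c\|_\infty$; after that the argument is a verbatim repetition of the proof of Theorem~\ref{thm13}. Both proofs are at heart stage-by-stage telescopings in total variation, both yield a bound of order $T^2\|c\|_\infty\delta_n$, and both identify uniform convergence as essential because the intermediate conditional state distributions depend on $n$ and $\Pi$ and cannot be dominated a priori. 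What your version buys is modularity and an explicit, policy-uniform perturbation bound that dispenses with the recursive $\zeta^n_{k,t}$ functions; what the paper's version buys is a decomposition closer to dynamic-programming manipulations that localizes exactly which one-stage channel replacement produces each error term. Two small points you should make explicit when writing this up: the measurability of $x\mapsto\int f(x,y)K'(dy|x)$ used in the composition estimate (standard for Borel kernels and jointly measurable bounded $f$), and the fact that the supremum in the middle term of the composition estimate is over the full conditioning history, which for the observation step reduces to $\sup_{x_t}$ and hence to $\delta_n$ precisely because $Q_n$, $Q$ depend on the history only through the current state.
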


\begin{proof} Let $\epsilon>0$ and pick  $\epsilon$-optimal policies
$\Pi^n=\{\gamma^n_0,\gamma^n_1,\dots,\gamma^n_{T-1}\}$ and
$\Pi=\{\gamma_0,\gamma_1,\dots,\gamma_{T-1}\}$ for channels $Q_n$ and
$Q$, respectively. That is, using the notation in (\ref{Cost}), we
have $ J(P,Q_n,\Pi^n) < J(P,Q_n) +\epsilon$ and $J(P,Q,\Pi) <
J(P,Q) +\epsilon$. The argument used to obtain (\ref{s33}) then gives
\begin{eqnarray}\label{s34}
\lefteqn{|J(P,Q)-J(P,Q_n)|} \nonumber \quad  \\
& \leq & \max \biggl(J(P,Q,\Pi^n) -
J(P,Q_n,\Pi^n) , J(P,Q_n,\Pi) - J(P,Q,\Pi)
\biggr)+  \epsilon.
\end{eqnarray}
We will show that  both terms in the maximum converge to zero.
First we consider the term
\begin{equation}
\label{eq_sumdiff}
J(P,Q^n,\Pi^n) - J(P,Q,\Pi^n) = \sum_{t=0}^{T-1} E_P^{Q^n,\Pi^n}
[c(X_t,U_t)] -  E_P^{Q,\Pi^n}
[c(X_t,U_t)].
\end{equation}
Under policy $\Pi^n=\{\gamma^n_0,\gamma^n_1,\ldots,\gamma^n_{T-1}\}$,
we have $U_t=\gamma^n_t(Y_{[0,t]},U_{[0,t-1]})$. We absorb in the
notation the dependence of $U_t$ on
$\gamma^n_0,\ldots,\gamma^n_{t-1}$ and write
$U_t=\gamma^n_t(Y_{[0,t]})$.

For $t=0,\ldots,T-1$ and  $k=0,\ldots,t$ define $\zeta^n_{k,t}:
\mathbb{X}^k \times \mathbb{Y}^k \to \mathbb{R}$ by  setting
\[
  \zeta^n_{t,t}(x_{[0,t]},y_{[0,t]}):= c(x_t, \gamma^n_t(y_{[0,t]})
\]
and defining recursively for $k=t-1,\ldots,0$
\[
  \zeta^n_{k,t}(x_{[0,k]},y_{[0,k]}) := \int
  P(dx_{k+1}|x_k,\gamma^n_k(y_{[0,k]}))Q_n(dy_{k+1}|x_{k+1})
  \zeta^n_{k+1,t} (x_{[0,k+1]},y_{[0,k+1]}).
\]
Note that $\|\zeta^n_{t,t}\|_{\infty}\le \|c\|_{\infty}$ and thus
$\|\zeta^n_{k,t}\|_{\infty}\le \|c\|_{\infty}$ for all
$k=t-1,\ldots,0$.

Fix $0\le k\le t$ and consider a system such that the observation
channel is $Q$ at stages $0,\ldots,k-1$ and $Q_n$ at stages
$k,k+1,\ldots,t$. Let $\mu^n_k$ denote the distribution of the
resulting process segment $(X_{[0,k]},Y_{[0,k]})$ under policy $\Pi^n$
(by definition $\mu^n_0=PQ_n$).  Also under policy $\Pi^n$, let
$\nu^n_k$ denote the distribution of $(X_{[0,k]},Y_{[0,k]})$ if the
observation channel is $Q$ for all the stages $0,\ldots,t$. Then we
have
\[
 E_P^{Q^n,\Pi^n}[c(X_t,U_t)] = \int \mu^n_0(dx_0,dy_0)\zeta^n_{0,t}(x_0,y_0)
\]
and
\[
 E_P^{Q,\Pi^n}[c(X_t,U_t)] = \int
 \nu^n_t(dx_{[0,t]},dy_{[0,t]})\zeta^n_{t,t}(x_{[0,t]},y_{[0,t]}).
\]
Note that by construction, for all $k=1,\ldots,t$
\begin{eqnarray*}
\lefteqn{\int \mu^n_k(dx_{[0,k]},dy_{[0,k]})
 \zeta^n_{k,t}(x_{[0,k]},y_{[0,k]}) } \qquad\qquad  \\
& = & \int \nu^n_{k-1}(dx_{[0,k-1]},dy_{[0,k-1]})
 \zeta^n_{k-1,t}(x_{[0,k-1]},y_{[0,k-1]}).
\end{eqnarray*}
Thus each term in the sum on the right hand side of
(\ref{eq_sumdiff}) can be expressed as a telescopic sum, which in turn
can be bounded term-by-term, as follows:
\[\quad \]
\begin{eqnarray}
\bigl|E_P^{Q^n,\Pi^n} [c(X_t,U_t)] -  E_P^{Q,\Pi^n}
 [c(X_t,U_t)]  \bigr|  &=& \biggl| \sum_{k=0}^t \int \mu^n_k(dx_{[0,k]},dy_{[0,k]})
\zeta^n_{k,t}(x_{[0,k]},y_{[0,k]})   \nonumber  \\
& & \mbox{} -  \int \nu^n_k(dx_{[0,k]},dy_{[0,k]})
\zeta^n_{k,t}(x_{[0,k]},y_{[0,k]}) \biggr|  \nonumber  \\
&\le & \sum_{k=1}^t \|\mu^n_k-\nu^n_k\|_{TV}
 \|\zeta^n_{k,t}\|_{\infty}   \nonumber \\
&\le & \|c\|_{\infty} \sum_{k=1}^t \|\mu^n_k-\nu^n_k\|_{TV}. \label{sumTVbound}
\end{eqnarray}

For any Borel set  $B\subset \mathbb{X}^k\times \mathbb{Y}^k$, define
$
B(x_{[0,k]},y_{[0,k-1]}) =\{y_k\in \mathbb{Y}:\, (x_{[0,k]},y_{[0,k]}) \in B\}
$, so that
\begin{eqnarray*}
 |\mu^n_k(B)-\nu^n_k(B)| &=& \biggl|\int
 \nu^n_{k-1}(dx_{[0,k-1]},dy_{[0,k-1]}) \int
   P(dx_k|x_{k-1},\gamma^n_{k-1}(y_{[0,k-1]}) \\
  & & \quad  \biggl(
   Q_n(B(x_{[0,k]},y_{[0,k-1]})|x_k)-
 Q(B(x_{[0,k]},y_{[0,k-1]})|x_k)\bigg)\biggr| \\
  &\le & \sup_{x_k\in \mathbb{X}} \|Q_n(\,\cdot\, |x_k)- Q_n(\,
 \cdot\, |x_k)\|_{TV}.
\end{eqnarray*}
The preceding bound and the uniform convergence of $\{Q_n\}$ imply $\lim_n
\|\mu^n_k-\nu^n_k\|_{TV}=0$ for all $k$.  Combining this with
(\ref{sumTVbound}) and (\ref{eq_sumdiff}) gives
\[
J(P,Q^n,\Pi^n) - J(P,Q,\Pi^n) \to 0.
\]
Replacing $\Pi^n$ with $\Pi$ we can use an identical argument to show
that $J(P,Q^n,\Pi) \to J(P,Q,\Pi)$. Since $\epsilon>0$ in (\ref{s34})
was arbitrary, the proof is complete.
\end{proof}

We obtained the continuity of the optimal cost on the space of
channels equipped with a more stringent notion for convergence in
total variation. This result and its proof indicate that
further technical complications emerge in multi-stage problems. Likewise,
upper semi-continuity under weak convergence and setwise convergence
require more stringent uniformity assumptions, which we leave for
future research.

One further interesting problem regarding the multi-stage case is to
consider adaptive observation channels. For example, one may aim to
design optimal adaptive quantizers for a control problem. In this
case, Markov Decision Process tools can be used for obtaining
existence conditions for optimal channels and quantizers. Some related
results on optimal adaptive quantization are presented in
\cite{BorkarMitterTatikonda}.

\section{Concluding remarks, some implications and future work}\label{sectionConclusion}
This paper studied the structural and topological properties of some
optimization problems in stochastic control in the space of
observation channels. The main problem we considered is how to
approach appropriate  notions of convergence and distance while studying
communication channels in the context of stochastic control problems.

The restriction to Euclidean state spaces is not essential and many
(but not all) of the positive results can be extended to the case
where $\mathbb{X}$, $\mathbb{Y}$, and $\mathbb{U}$ are arbitrary
Polish spaces. In particular, all the positive results in Sections
\ref{sectionCont} carry through without change, except
Theorem~\ref{USCWeak}. The results of Section~\ref{sectionExistence}
hold for this more general setup (however, in
Lemma~\ref{TVcompactmeasure} we need the additional condition that the
space is $\sigma$-compact). Likewise,  most of the positive results in
Section~\ref{secapp} on quantization hold more generally (in fact,
Theorem~\ref{BasisQuantizers} holds for an arbitrary measurable
space), but two of the main results, Theorems \ref{compactConvexBins}
and \ref{thmex}, do need the assumption that $\mathbb{X}$ is a
finite-dimensional Euclidean space.

\subsection{Sufficient conditions for continuity under setwise and
  weak convergence}

A careful analysis of the proof of Theorem \ref{thm13} reveals that we
need a uniform convergence principle for setwise convergence to be
sufficient for continuity.

That is, we wish to have
\begin{equation}\label{UniformConvergenceSetwise}
\lim_{n \to \infty}  \sup_{\gamma \in \mathcal{F}} \bigg| \int
\bigg( \int Q(dy|x) c(x,\gamma(y))  - \int Q_n(dy|x) c(x,\gamma(y))
\bigg) P(dx) \bigg| =0,
\end{equation}
where $\mathcal{F}$ is a set of allowable policies, to be able to have continuity under setwise convergence. Thus, one
important question of practical interest, is the following: What
type of stochastic control problems, cost functions, and allowable
policies lead to solutions which admit  such a uniform convergence  principle
under setwise convergence? Some sufficient conditions for uniform
setwise convergence are presented in \cite{Topsoe}.

Likewise, a parallel discussion applies for weak convergence under the
assumption that for every $Q_n$ and for $Q$, corresponding optimal
policies $\gamma_n$ and $\gamma$ are continuous and are assumed to be
from a given class of policies $\mathcal{F}$. One wants to have
\[
\int_{\mathbb{X}\times \mathbb{Y} } c(x,\gamma_n(y)) Q_n(dy|x)  P(dx)
 \to \int_{\mathbb{X}\times \mathbb{Y} }
c(x,\gamma(y))  Q(dy|x) P(dx).
\]

A sufficient condition for this is the following form of uniform weak
convergence:
\[
\lim_{n \to \infty} \sup_{\gamma \in {\cal F}}
\bigg|\int_{\mathbb{X}\times \mathbb{Y} }
c(x,\gamma(y)) Q_n(dy|x)P(dx)   - \int_{\mathbb{X}\times \mathbb{Y} }
c(x,\gamma(y))   Q(dy|x) P(dx)\bigg | = 0.
\]

\subsection{Empirical consistency of optimal controllers}

One issue to discuss is the connections of  our
results with {\em consistency} in learning the channel from empirical
observations.

When one does not know the system dynamics, such as the observation
channel, one typically attempts to learn the channel via test inputs
or empirical observations.  Let $\{(X_i,Y_i),\, i \in \mathbb{N} \}$
be an $\mathbb{X} \times \mathbb{Y}$-valued i.i.d sequence generated
according to some distribution $\mu$. Define the the empirical
occupation measures for every $n \in \mathbb{N}$, by letting
\[
\mu_n(B)=
\frac{1}{n}\sum_{i=1}^{n} 1_{\{(X_i,Y_i) \in B\}},
\]
for every measurable $B \subset \mathbb{X} \times \mathbb{Y}$. Then
one has $\mu_n(B) \to \mu(B)$ almost surely (a.s$.$) by the strong law
of large numbers. However, it is generally not true that $\mu_n\to
\mu$ setwise a.s.\ (e.g., $\mu_n$ never converges to $\mu$ setwise
when either $X_i$ or $Y_i$ has a nonatomic distribution), in which
case $\mu_n$ cannot converge to $\mu$ in total variation.

On the other hand, again by the strong law, for any  $\mu$-integrable
function $f$ on
$\mathbb{X} \times \mathbb{Y}$, one has, almost surely,
\[
\lim_{n \to \infty} \int f(x,y) \mu_n(dx, dy) = \int f(x,y) \mu(dx,
dy)
\]
In particular, $\mu_n \to \mu$ weakly with probability one
\cite{Dud02}.

In the learning theoretic context, the convergence of the costs
optimal for $\mu_n$ to the cost optimal for $\mu$ is called the
consistency of empirical risk minimization (see \cite{Vap00} for an
overview).  In particular, if the cost function and the allowable
control policies $\mathcal{F}$ are such that
\[
\lim_{n \to \infty} \sup_{\gamma \in {\cal F}} \bigg|\int
c(x,\gamma(y)) \mu_n(dx, dy) - \int c(x,\gamma(y)) \mu(dx, dy) \bigg|
=0,
\]
then we obtain consistency.

A class of measurable functions ${\cal E}$ is
called a {\em Glivenko-Cantelli class} \cite{Dudley}, if the integrals
with respect to the empirical measures converge almost surely to the
integrals with respect to the true measure uniformly over ${\cal
  E}$. Thus, if \[{\cal G}=\{\gamma: c(x,\gamma(y)) \in {\cal E}\},\]
where ${\cal E}$ is a class of Glivenko-Cantelli family of functions,
then we could establish consistency. One example of a
Glivenko-Cantelli family of real functions on $\mathbb{R}^N$ is the
family $\{f:\, \|f\|_{BL}\le M\}$ for some $0<M<\infty$, where
$\|\,\cdot\,\|_{BL}$ denotes the bounded Lipschitz norm  \cite{Dudley}.

Thus, if we restrict the class of control policies, and given a cost
function, we can obtain consistency and robustness to mismatch in the
channel due to learning. The classification of  the class of objective
functions and policies  which would lead to such a consistency result
is a future research problem.

\section{Appendix}
\subsection{Proof of Lemma \ref{UniversalInputCompactness}}
(i) \ Since $c(x,\,\cdot\,)$ is
continuous and bounded on $\mathbb{Y}$ for all $x$,  we have
\begin{eqnarray*}
  \lim_{n\to \infty} \int_{\mathbb{X}\times \mathbb{Y} } c(x,y) PQ_n(dx \, dy)
& =&
  \lim_{n\to \infty} \int_{\mathbb{X}}  \left( \int_{\mathbb{Y}}
  c(x,y) Q_n(dy|x) \right)  P(dx) \\
&=& \int_{\mathbb{X}}  \left( \int_{\mathbb{Y}}
  c(x,y) Q(dy|x) \right)  P(dx) \\
&=&
 \int_{\mathbb{X}\times \mathbb{Y} } c(x,y) PQ(dx,dy)
\end{eqnarray*}
where first we used  Fubini's theorem, and then the dominated convergence
theorem and the fact that $ \int_{\mathbb{X}}
  c(x,y) Q_n(dy|x)$ is bounded and converges to   $\int_{\mathbb{X}}
  c(x,y) Q(dy|x)$ for $P$-a.e.\ $x$.
\medskip

\noindent (ii)\ Let  $A \in {\cal B}(\mathbb{X} \times \mathbb{Y})$
and for $x$, let $A_x=\{y:\, (x,y)\in A\}$.  Similarly to the previous
proof,
\begin{eqnarray*}
  \lim_{n\to \infty}  PQ_n(A)
& =&
  \lim_{n\to \infty} \int_{\mathbb{X}} Q_n(A_x|x)  P(dx) \\
&=&  \int_{\mathbb{X}} Q(A_x|x)  P(dx) \\
&=& PQ(A)
\end{eqnarray*}
by the dominated convergence theorem since $\lim_{n\to \infty}
Q_n(A_x|x)= Q(A_x|x) $  for $P$-a.e.\ $x$.

\medskip

\noindent (iii)\ We have
\begin{eqnarray*}
\sup_{A  \in  {\cal B}(\mathbb{X} \times \mathbb{Y})}   |PQ_n(A)-PQ(A)|
&=& \sup_{A\in  {\cal B}(\mathbb{X} \times \mathbb{Y})}
\left|\int_{\mathbb{X}} Q_n(A_x|x)  P(dx) -
  \int_{\mathbb{X}} Q(A_x|x)  P(dx) \right|  \\
&\le & \sup_{A\in  {\cal B}(\mathbb{X} \times \mathbb{Y})}
  \int_{\mathbb{X}}\bigl| Q_n(A_x|x) -
  Q(A_x|x)\bigr|  P(dx)  \\
&\le &   \int_{\mathbb{X}} \; \;  \sup_{B\in  {\cal B}(\mathbb{Y})}\bigl|
  Q_n(B|x) -    Q(B|x)\bigr|  P(dx).
\end{eqnarray*}
Since $ \sup\limits_{B\in  {\cal B}(\mathbb{Y})}\bigl|
  Q_n(B|x) -    Q(B|x)\bigr| \to 0$  for $P$-a.e.\ $x$, an application
  of the dominated convergence theorem completes the proof. \qed

\subsection{Proof of Theorem \ref{InfimumMinimum}}
We have
\[
 J(P,Q)= \inf_{\gamma\in \mathcal{G}} \int_{\mathbb{X}\times
 \mathbb{Y}} c(x,\gamma(y)) Q(dy|y)P(dx).
\]
Let $(X,Y)\sim PQ$ and let $P(\,\cdot\,|y)$ be the (regular) conditional
distribution of $X$ given $Y=y$. If $(PQ)_{\mathbb{Y}}$ denotes the
  distribution of $Y$, then
\begin{eqnarray*}
J(P,Q)&= & \inf_{\gamma\in \mathcal{G}} \int_{\mathbb{Y}}
\int_{\mathbb{X}} c(x,\gamma(y))P(dx|y) (PQ)_{\mathbb{Y}}(dy)\\
  &=& \int_{\mathbb{Y}}  \biggl( \inf_{u\in \mathbb{U}}
\int_{\mathbb{X}} c(x,u)P(dx|y) \biggr) (PQ)_{\mathbb{Y}}(dy).
\end{eqnarray*}
where the validity of the second equality is explained below.

By assumption \textsc{A3}, $c$ is bounded and  $c(x,u_n)\to
c(x,u)$ if $u_n\to u$ for all $x$; thus by the dominated convergence
theorem
\[
\int_{\mathbb{X}} c(x,u_n)P(dx|y) \to \int_{\mathbb{X}} c(x,u)P(dx|y)
\]
proving that $g(u,y)= \int_{\mathbb{X}} c(x,u)P(dx|y)$ is continuous
in $u$ for each $y$. Since $\mathbb{U}$ is compact, there exists
$\gamma^*(y)\in \mathbb{U}$ such that $g(\gamma^*(y),y)= \inf_{u\in
  \mathbb{U}}g(u,y)$.  A standard argument shows
that $\gamma^*: \mathbb{Y}\to \mathbb{U} $ can be taken to be
measurable (see, e.g., Appendix~D of \cite{HernandezLermaMCP}) and we have
\[
 J(P,Q)= \int_{\mathbb{X}\times
 \mathbb{Y}} c(x,\gamma^*(y)) Q(dy|y)P(dx). \tag*{\qed}
\]


\begin{thebibliography}{bib}

\bibitem{AbayaWise} E. A. Abaya and G. L. Wise, ``Convergence of
   vector quantizers with applications to optimal quantization, {\em
     SIAM Journal on Applied Mathematics}, vol. 44, pp.  183--189, 1984.

\bibitem{DualEffect} Y. Bar-Shalom and E. Tse, ``Dual effect,
   certainty equivalence and separation in stochastic control", {\em
     IEEE Transactions on Automatic Control}, vol.19, pp. 494--500,
    October 1974.

\bibitem{Bar02} A. Barvinok, {\em A Course in Convexity}, vol. 54 of
  Graduate Studies in Mathematics. Providence, RI: American
Mathematical Society, 2002.


\bibitem{Billingsley} P. Billingsley, {\em Convergence of Probability
   Measures}. New York: Wiley 1968.

\bibitem{Bil86}
P.~Billingsley, {\em Probability and Measure}.
\newblock New York: Wiley, 2nd~ed., 1986.

\bibitem{BogI07} V. I. Bogachev, {\em Measure Theory}.  Berlin,
  Heidelberg: Springer Verlag, 2007 . 



\bibitem{BorkarMitterTatikonda}  V. S. Borkar, S. K. Mitter,
and S. Tatikonda, ``Optimal sequential
vector quantization of Markov sources,''  {\em SIAM Journal on Control and
Optimization}, vol. 40, pp. 135-148, 2001.

\bibitem{Charalambous} C.D. Charalambous and F. Rezaei, ``Stochastic
   uncertain systems
subject to relative entropy constraints: Induced norms and
monotonicity properties of minimax games", {\em IEEE Transactions on
   Automatic Control}, vol.~52, no.~4, pp 647--663, May 2007.


\bibitem{CoverThomasBook} T. M. Cover and J. A. Thomas, {\em Elements of
Information Theory}, Wiley, NY,  1991.

\bibitem{DevroyeGyorfi} L. Devroye, and L. Gy\"orfi, {\em Non-parametric Density Estimation: The $L_1$ View}, New York: John Wiley, 1985.

\bibitem{DeGy90} L.~Devroye and  L.~Gy\"orfi ``No empirical measure
  can converge in
the total variation sense for all distributions,'' \emph{Annals of
  Statistics}, 18, pp.\ 1496-1499, 1990.


\bibitem{Dudley} R.M. Dudley, E. Gine, and J. Zinn, ``Uniform and
   universal Glivenko-Cantelli classes", {\em Journal of Theoretical
     Probability}, vol. 4, pp. 485--510, 1991.

\bibitem{Dud02} R. M. Dudley, \emph{Real Analysis and Probability},
      Cambridge University Press, Cambridge, 2nd ed., 2002.


\bibitem{Ghosh} J. K. Ghosh, and R. V. Ramamoorthi, {\em Bayesian
   Nonparametrics}, Springer, New York, 2003.


\bibitem{GrLu00}
S.~Graf and H.~Luschgy, {\em Foundations of Quantization for Probability
   Distributions}.
\newblock Berlin, Heidelberg: Springer Verlag, 2000.


\bibitem{GrayDavisson} R. M. Gray and L. D. Davisson, ``Quantizer
   mismatch," {\em IEEE Transactions on Communications}, vol. 23, pp.
439--443, 1975.

\bibitem{GyKo07} L.~Gy\"orfi and M.~Kohler, ``Nonparametric estimation
  of conditional distributions,'' {\em
     IEEE
   Transactions on Information Theory}, vol. 53, pp. 1872--1879, May
  2007.


\bibitem{ConvexCodecell} A. Gy\"orgy and T. Linder, ``Codecell
   convexity in optimal entropy-constrained vector quantization,'' {\em
     IEEE   Transactions on Information Theory}, vol. 49, pp. 1821--1828,
   July 2003.


\bibitem{HernandezLermaMCP} O. Hernandez-Lerma, J.B. Lasserre, {\em Discrete-Time Markov Control Processes, Basic Optimality Criteria}, Springer-Verlag, New York, 1996.

\bibitem{HernandezLermaLasserre} O. Hernandez-Lerma, J.B. Lasserre,
   {\em Markov Chains and Invariant Probabilities}, Birkh\"auserVerlag,
   Basel, 2003.


\bibitem{Huber} P. J. Huber, "Robust estimation of a location
   parameter", {\em The Annals of Mathematical Statistics}, vol. 35,
   No. 1 (Mar., 1964), pp. 73--101.



\bibitem{Linder} T. Linder, ``On the training distortion of vector
   quantizers", {\em IEEE
   Transactions on Information Theory},  vol. 46,
   pp. 1617--1623, 2000.

\bibitem{MurakiOhya} N. Muraki and M. Ohya, ``Note on continuity of
   information rate," {\em Illinois Journal of Mathematics}, vol. 36,
   pp. 529--550, Winter 1992.



\bibitem{Pollard} D. Pollard, ``Quantization and the method of
   $k$-means, {\em IEEE
   Transactions on Information Theory}, vol. 28,
   pp. 199--205, 1982.

\bibitem{Poor} H. V. Poor, ``On robust Wiener filtering, {\em IEEE
Transactions on Automatic Control}, vol. AC-25, pp. 521--526, Jun. 1980.



\bibitem{Charalambous15} F. Rezaei, C.D. Charalambous and N. U. Ahmed,
   ``Optimization of stochastic uncertain systems with variational
norm constraints",  in {\em Proc. IEEE Conference on Decision and
   Control}, pp. 2159--2163, New Orleans, LA, USA, Dec. 2007



\bibitem{Rud87} W. Rudin, {\em Real and Complex Analysis}, New York:
   McGraw-Hill, 3rd~ed., 1987.


\bibitem{Charalambous2} Y. Socratous, F. Rezaei and C.D. Charalambous,
   ``Nonlinear estimation for a class of systems", {\em IEEE
   Transactions on Information Theory}, vol. 55, no. 4,  pp. 1930-01938,
Apr.\ 2009.

\bibitem{Tsitsiklis} J. N. Tsitsiklis, ``Extremal properties of
   likelihood-ratio quantizers",  {\em IEEE Transactions on
     Communications}, Vol. 41, pp. 550--558, Apr. 1993.

\bibitem{Topsoe} F. Topsoe, ``Uniformity in convergence of measures",
   {\em Z. Wahrscheinlichkeitsth}, vol. 39, pp. 1--30, 1977.


\bibitem{WhZy77} R. L. Wheeden and A. Zygmund, {\em Measure and
   Integral}, New York: Marcel Dekker, 1977.

\bibitem{Vap00} V. N. Vapnik \emph{The Nature of Statistical Learning
   Theory} Springer, New York,  2nd ed., 2000.

\bibitem{Witsenhausen79} H. S. Witsenhausen, ``On the structure of
   real-time source coders,'' {\em Bell Syst. Tech. J.}, 58:1437-1451,
   July/August 1979.



\bibitem{WuVerdu} Y. Wu and S. Verd\'u, ``Functional properties of
   MMSE", {\em Proc. IEEE International Symposium on Information
     Theory}, Austin, TX, June 13-18, 2010.


\end{thebibliography}
\end{document}